\def\R{{\mathbb R}}
\def\Z{{\mathbb Z}}
\def\N{{\mathbf N}}
\newcommand{\itg}{\int \limits}
\newcommand{\ee}{{\rm e}}
\DeclareMathOperator{\erf}{erf}
\def\re{{\rm Re\,}}
\def\ba{\boldsymbol{\alpha}}
\def\a{{\alpha}}
\def\e{\varepsilon}
\def\eps{\varepsilon}
\def\t{\tau}
\def\h{\eta}
\def\G{\Gamma}
\newcommand{\cB}{\mathcal B}
\newcommand{\cD}{\mathcal D}
\newcommand{\cL}{\mathcal L}
\newcommand{\cO}{\mathcal O}
\newcommand{\cS}{\mathcal S}
\newcommand{\cF}{\mathcal F}
\def\bx{{\mathbf x}}
\def\by{{\mathbf y}}
\def\bm{{\mathbf m}}
\def\bk{{\mathbf k}}
\newtheorem{thm}{Theorem}[section]
\title{Fast cubature of high dimensional biharmonic potential based on Approximate Approximations}
\author{ Flavia Lanzara 
\thanks{Department of Mathematics, 
Sapienza University of Rome, 
Piazzale Aldo Moro 2, 00185 Rome, Italy. 
{\it email:} lanzara@mat.uniroma1.it}
\and Vladimir Maz'ya \thanks{Department of Mathematics, University of
Link\"oping,  581 83 Link\"oping, Sweden. }
\thanks{
Department of Mathematical Sciences, M\&O Building, University of
Liverpool, Liverpool L69 3BX, UK. {\it email:} vlmaz@mai.liu.se}
\and Gunther Schmidt
\thanks{Lichtenberger Str. 12, 10178 Berlin, Germany. {\it email:} schmidt.gunther@online.de}
}
\begin{document}
%\date{}
\maketitle
{\bf  Abstract.}  { We derive new formulas for the high dimensional biharmonic potential acting on Gaussians or Gaussians times special polynomials. These formulas can be used to construct accurate cubature formulas of an arbitrary high order which are fast and effective also in very high dimensions. Numerical tests show that the formulas are accurate and provide the predicted approximation rate \(O(h^8)\) up to the dimension \(10^7\).}

{\bf AMS subject classifications.} 65D32, 41A30, 41A63.

{\bf Keywords:} Biharmonic potential, higher dimensions, separated representations, multi\-di\-mensio\-nal convolution.
\section{Introduction}

 The present paper is devoted to the approximation of the high dimensional  biharmonic potential  
\begin{equation}\label{biharm}
\cB_n f(\bx)=\frac{\G(n/2)}{4\pi^{n/2} (n-2)(n-4)} \itg_{\R^n}\frac{f(\by)}{|\bx-\by|^{n-4}}d\by,\qquad \bx\in\R^n,\quad {n\geq 3, \, n\neq 4 ,}
\end{equation}
(cf. \cite[p.235]{mitrea} or \cite[p.100]{NW}) for integrable \(f\), by using approximate approximations (cf.\cite{MSbook} and the references therein). 
Approximate approximations allow to construct efficient high order cubature formulas for convolution integral operators even with singular kernel functions (cf. \cite{MS95}). Due to the operation number proportional to \(h^{-n}\), where \(h\) denotes size of a uniform grid on the support of the density, these methods are practical only for small \(n\). 
By combining approximate approximations with separated representations  (also called tensor structured approximations) introduced in \cite{BM,BM2}, we derive a method for approximating volume potentials which is accurate and fast  in high dimensions. \\
%%%%%%%%%%%%%%%%%%%%%%%
In the last years moderns method based on tensor structured approximations have been applied successfully to some class of multidimensional integral operators (e.g. \cite{BCFH,BFHKM,BKM,GHK,Hack, HK,HK2,Khor,Khor2}).
In this paper, for the first time to our knowledge, the cubature of the high dimensional biharmonic potential of an arbitrary high order is considered. We derive new formulas for  the biharmonic  potential acting on Gaussians or special polynomial times Gaussians. These formulas can be used to construct fast and high-order accurate cubature formulas in high dimensions.  We report on numerical  experiments which show approximation order \(O(h^8)\)  up to dimension \(n=10^7\).\\
%%%%%%%%%%
First results on the fast cubature of high dimensional harmonic potential in the framework of approximate approximations have been obtained in \cite{LMS2,LMS3}. The procedure has been applied in \cite{LMS4,LMS5} to advection-diffusion potentials and in \cite{LMS6} to parabolic problems of second order. In \cite{LMS17} our approach has been  extended to the computation of the Schr\"odinger potential, where standard cubature methods are very expansive due to the  fast oscillations of the kernel. Here we consider the problem of constructing fast cubature formulas for higher order problems.

We construct an approximation of \(\cB_n f\) if we replace \(f\) by functions with 
analytically known biharmonic potential. Specifically, we approximate the density \(f\in C^N_0(\R^n)\)
with the approximate quasi-interpolant
\begin{equation}\label{fhD}
f_{h,\cD}(\bx)=\cD^{-n/2} \sum_{\bm\in\Z^n} f(h \bm) \eta\left(\frac{\bx -h\bm}{h\sqrt{\cD}}\right) , 
\end{equation}
where \(h\) and \(\cD\) are positive and \(\eta\) is a smooth and rapidly decaying function of the Schwarz space \(\cS(\R^n)\). The generating function is chosen so that \(\cB_n \eta\) can be computed  analytically or efficiently numerically. If the generating function \(\eta\) satisfies the moment condition of order \(N\)
\begin{equation}\label{moment}
\int_{\R^n} \bx^\alpha \eta(\bx)d\bx=\delta_{0,\alpha},\qquad 0\leq |\alpha|<N \, ,
\end{equation}
then (\cite[p.21]{MSbook})
\begin{equation*}
|f(\bx)-f_{h,\cD}(\bx)|\leq c (\sqrt{\cD}h)^N\Vert\nabla_N f\Vert_{L_\infty} +\sum_{k=0}^{N-1}\e_k(h\sqrt{\cD})^k
|\nabla_k f(\bx)|
\end{equation*}
with 
\[
0<\e_k\leq \sum_{\bm\in\Z^n\setminus\{0\}}|\nabla_k \cF \h (\sqrt{\cD}\bm)|;\quad
\lim_{\cD\to \infty} \sum_{\bm\in\Z^n\setminus\{0\}} |\nabla_k \cF \h (\sqrt{\cD}\bm)|=0
\]
and
\[
|\nabla_k f(\bx)|=\sum_{|\alpha|=k}\frac{|\partial^\alpha f(\bx)|}{\alpha!}\,.
\]
Here $\cF \h$ denotes the Fourier transform of $\h$
\[
\cF\h(\by)=\itg_{\R^n} \h(\bx) \ee^{-2 i \pi \langle \bx,\by\rangle}d\bx.
\]
Hence, for any {\it saturation error} $\e>0$, one can fix the parameter $\cD>0$ so that 
\begin{equation*}\label{errq}
|f(\bx)-f_{h,\cD}(\bx)|=\cO((\sqrt{\cD}h)^N+\e)\Vert f\Vert_{W^N_\infty} \, ,
\end{equation*}
where $W^N_\infty=W^N_\infty(\R^n)$ denotes the Sobolev space of $L_\infty$-functions whose generalized derivatives up to the order $N$ also belong to $L_\infty$.
Then the linear combination 
\begin{equation}\label{cub1}
\cB_n f_{h,\cD}(\bx)= \frac{h^4}{\cD^{n/2-2}}\!\!\!\sum_{\bm\in\Z^n}f(h \bm)\cB_n\eta\left(\frac{\bx-h \bm}{h\sqrt{\cD}}\right)\end{equation}
gives rise to a new class of semi-analytic cubature formulas
with the property that, for any prescribed accuracy $\e>0$, one can fix the parameter $\cD$ so that  \eqref{cub1} differs in some uniform or $L_p$-norm from the integral \eqref{biharm} by 
\begin{equation*}\label{errror2}
\cO((\sqrt{\cD}h)^N+(\sqrt{\cD}h)^4\e)\qquad {\rm as}\qquad h\to0 \, ,
\end{equation*}
where $N$ is determined by $\eqref{moment}$. 
Estimates of the cubature error for general generating functions are proved  in Section \ref{sec2}.\\
Therefore, to construct cubature formulas for \eqref{biharm} it remains to compute 
the integral \(\cB_n \eta\). 
This can be taken analytically or transformed to a simple one-dimensional  integral. If we choose the generating functions 
\begin{equation*}
\eta(\bx)=\pi^{-n/2} L_{M-1}^{(n/2)}(|\bx|^2)\ee^{-|\bx|^2}
\end{equation*}
with the generalized Laguerre polynomials \(L_M^{(\gamma)}\) then \({\cB}_n \eta\)  can be taken analytically.  The function \(\eta\) satisfies the moment conditions \eqref{moment} with \(N=2M\) and \eqref{cub1} gives rise to semi-analytic cubature formulas for \(\cB_n f\) of order \(h^{2M}\) modulo the saturation error. In Section \ref{sec3} we describe these formulas when \(M=1\) that is  for the exponential \(\pi^{n/2}\ee^{-|\bx|^2}\) and, in Section \ref{sec4}, when \(M>1\).

If the generating function is the tensor product of one-dimensional functions of the form
\[
\eta(\bx)=\prod_{j=1}^n \pi^{-1/2} L_{M-1}^{(1/2)}(x_j^2)\ee^{-x_j^2}\, ,
\]
each of them satisfying the moment conditions \eqref{moment} of order \(2M\), then \({\cB}_n \eta\) is transformed to a one-dimensional integral with a separable integrand, i.e., a product of functions depending only on one of the variables. This is considered in Section \ref{sec5} where we obtain, for example, the integral representation
\begin{equation*}
\cB_n( \ee^{-|\cdot|^2})(\bx)= \frac{1}{16}\itg_0^\infty \frac{\ee^{-|\bx|^2/(1+t)}}{(1+t)^{n/2}}\,t\,dt,\qquad n\geq 5.
\end{equation*}
These one-dimensional integrals with separable integrand  in combination with a quadrature rule lead to accurate separated representations of the potential acting on the generating function. 
In Section \ref{sec6} for functions \(f\) with separated representations, i.e., within a given accuracy they can be represented as a sum of products of univariate functions, we derive formulas which reduces the \(n\)-dimensional convolution \eqref{biharm} to one-dimensional discrete convolutions. 
Thus for the computation of \eqref{biharm} only one-dimensional operations are used. We derive formulas of an arbitrary order fast and accurate  in high dimensions. 
We provide results of numerical experiments which show that even for very high space dimensions the approximations preserve the predicted convergence order \(2,4,6\) and \(8\) of the cubature.

\section{Cubature error}\label{sec2}
\setcounter{equation}{0}

The estimate of the cubature error  
\[
\cB_n f_{h,\cD}(\bx)-\cB_n f(\bx)=\cB_n (f_{h,\cD}- f)(\bx)
\]
for the biharmonic potential \eqref{biharm} is a consequence of 
the structure of the quasi-interpolation error, which is  proved
in general form  in  \cite[Thm 2.28]{MSbook}.
Suppose that $f$ has generalized derivatives of order $N$.
Using Taylor expansions of $f$ for the nodes $h\bm$, $\bm \in {\Z}^n$,
and Poisson's summation formula the quasi-interpolant can be written as
\begin{align}\label{repre}
f_{h,\cD}(\bx)= &(-h\sqrt{\cD})^N f_N(\bx)
+\sum _{ |\boldsymbol{\alpha}|  = 0} ^{N-1}
\frac{(h \sqrt{\cD})^{|\boldsymbol{\alpha}|}}{\boldsymbol{\alpha} !(2 \pi i)^{|{\boldsymbol{\alpha}}|}} 
\, \partial ^{\boldsymbol{\alpha}} f(\bx)  \,    \sigma_{\boldsymbol{\alpha}}(\bx,\h,\cD)
\end{align}
    with the function 
\begin{align*} 
f_N(\bx)= \frac{1 }{{\cD}^{n/2}}
 \sum _{ |\boldsymbol{\alpha}|  = N} \frac{N}{\ba!} \!
 \sum_{\bm \in {\Z}^n} 
\Big(\frac{ \bx-h \bm }{h\sqrt{\cD}}\Big)^{\boldsymbol{\alpha}}  \!
\h \Big(
\frac{\bx\!-\!h\bm}{h\sqrt{\cD}} \Big)
     \itg _0 ^1 s^{N-1} \partial ^{\boldsymbol{\alpha}} f
     (s \bx + (1-s) h\bm)  \, ds \, ,
\end{align*}
containing  the remainder of the Taylor expansions,
and the fast oscillating functions
   \begin{align} \label{defsigma}
     \sigma_{\boldsymbol{\alpha}}(\bx,\h,\cD)
=\sum_{\boldsymbol{\nu} \in {\Z}^n} 
\partial^{\boldsymbol{\alpha}}\cF \eta(\sqrt{\cD} {\boldsymbol{\nu}})
\, \ee^{ \frac{2 \pi i}{h} \langle {\mathbf x}, {\boldsymbol{\nu}}\rangle } \, .
   \end{align}
It follows from \eqref{defsigma} that due to the moment condition \eqref{moment} the second sum in \eqref{repre}
transforms to
   \begin{align*} 
f(\bx)+\sum _{ |\boldsymbol{\alpha}|  = 0} ^{N-1}
\frac{( h \sqrt{\cD})^{|\boldsymbol{\alpha}|}}{\boldsymbol{\alpha} !(2 \pi i)^{|{\boldsymbol{\alpha}}|}} 
\, \partial ^{\boldsymbol{\alpha}} f(\bx) \, \eps_{\boldsymbol{\alpha}}(\bx,\h,\cD) \, ,
   \end{align*}
where we denote 
   \begin{align*} 
\eps_{\boldsymbol{\alpha}}(\bx,\h,\cD) = \sum_{{\boldsymbol{\nu} \in {\Z}^n\setminus 0} } 
\partial^{\boldsymbol{\alpha}}\cF \eta(\sqrt{\cD} {\boldsymbol{\nu}})
\, \ee^{ \frac{2 \pi i}{h} \langle {\mathbf x}, {\boldsymbol{\nu}}\rangle } 
= \sigma_{\boldsymbol{\alpha}}(\bx,\h,\cD) - \delta_{0|\boldsymbol{\alpha}|}
\, .
    \end{align*}
     We denote by \( W_p^N= W_p^N(\R^n)\) the Sobolev space of \(L_p=L_p(\R^n)\) functions whose generalized 
derivatives up to order \(N\) belong to \(L_p\), with the norm
   \[
\Vert f\Vert_{W^N_p}=\sum_{l=0}^N |f|_{W^l_p},\quad |f|_{W^l_p}=\sum_{|\a|=l}\Vert \partial^\a f\Vert_{L_p}\,.
\]
If  $f \in W_p^N$ with $N>n/p$, $1 \le p \le \infty$, then $f_N$ can be estimated by
   \begin{equation*} %\label{ch1rem}
\|f_N\|_{L_p} \le C_N   |f|_{W_p^N},\qquad |f|_{W_p^N}=\sum _{ |\boldsymbol{\alpha}|  = N} \|\partial^{\boldsymbol{\alpha}}f\|_{L_p} \, ,
   \end{equation*}
with a constant $C_N$ depending only on $\h$, $n$, and $p$. 
Hence \eqref{repre} leads to the representation of the quasi-interpolation error
   \begin{align} \label{fullexp}
f_{h,\cD}(\bx)-f(\bx) 
=(-h\sqrt{\cD})^N 
f_N(\bx)
+\sum _{ |\boldsymbol{\alpha}|  = 0} ^{N-1}
\frac{(h \sqrt{\cD})^{|\boldsymbol{\alpha}|}}{\boldsymbol{\alpha} !(2 \pi i)^{|{\boldsymbol{\alpha}}|}} 
\, \partial ^{\boldsymbol{\alpha}} f(\bx) \eps_{\boldsymbol{\alpha}} (\bx,\h,\cD) ,
    \end{align}
which implies in particular the error estimate in $L_p$
   \begin{align} \label{estilp}
 \|f-f_{h,\cD}\|_{L_p} \le C_N (h\sqrt{\cD})^N|f|_{W_p^N}
+\sum _{ k = 0} ^{N-1} \frac{(h \sqrt{\cD})^k}{(2 \pi)^k} \sum _{ |\boldsymbol{\alpha}|  = k} 
\frac{\| \epsilon_{\boldsymbol{\alpha}}(\cdot,\h,\cD)\|_{L_\infty}
\| \partial ^{\boldsymbol{\alpha}} f\|_{L_p}}{\boldsymbol{\alpha}! } \, .
    \end{align}

Thus the quasi-interpolation error consists
of a term ensuring $\cO(h^N)$-convergence and
of the so-called saturation error, which, in general, does not converge to zero as $h \to 0$.
However,  due to the fast decay of $\partial^{\boldsymbol{\alpha}}\cF \eta$,  one can choose
$\cD$ large enough to ensure that 
   \begin{equation} \label{esteps}
\| \epsilon_{\boldsymbol{\alpha}}(\cdot,\cD,\h)\|_{L_\infty} \le 
\sum_{\boldsymbol{\nu} \in {\Z}^n\setminus \boldsymbol{0}}
|\partial^{\boldsymbol{\alpha}}\cF \eta(\sqrt{\cD} {\boldsymbol{\nu}})|<\e
    \end{equation}
for given small $\e > 0$.

From Sobolev's theorem we have that for \(n\geq 5\), \(1<p<n/4\), and \(q=np/(n-4p)\) the integral \eqref{biharm} converges absolutely for almost every \(\bx\) and  the operator \(\cB_n\) is a bounded mapping from \(L_p\) into \(L_q\) (cf. \cite[p. 119]{stein}). Hence
\begin{equation}\label{BLq}
\Vert\cB_n f_{h,\cD}-\cB_n f\Vert_{L_q}\leq A_{p,q}\Vert f_{h,\cD}- f\Vert_{L_p} \, ,
\end{equation}
where \(A_{p,q}\) denotes the norm of \(\cB_n:L_p\to L_q\). Then, from \eqref{estilp} and \eqref{esteps}, 

\begin{thm} \label{thm2.1}
Let \(n\geq 5\), \(1<p<n/4\), \(q=np/(n-4p)\) and \(f\in W^N_p\) with \(N>n/p\). Then, for any \(\eps>0\) there exists \(\cD>0\) such that
\begin{equation}\label{estrough}
\Vert\cB_n f_{h,\cD}-\cB_n f\Vert_{L_q}\leq A_{p,q}\Big(
C_N (h\sqrt{\cD})^N|f|_{W_p^N}
+\eps \sum _{ k = 0} ^{N-1} \frac{(h \sqrt{\cD})^k}{(2 \pi)^k}\|\nabla_k f\|_{L_{p}}\Big) \, .
\end{equation}
We used the notation
\begin{align*}
\|\nabla_k f\|_{L_{p}} = 
 \sum _{ |\boldsymbol{\alpha}|  = k} 
 \frac{\| \partial ^{\boldsymbol{\alpha}} f\|_{L_{p}}}{\boldsymbol{\alpha}! } \, .
\end{align*}
\end{thm}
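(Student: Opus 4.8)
The plan is to derive \eqref{estrough} by composing the $L_p\to L_q$ mapping property of $\cB_n$ with the quasi-interpolation error representation \eqref{fullexp}--\eqref{estilp} already at our disposal; no new analytic estimate is required, only an appropriate choice of the parameter $\cD$. First I would note that the hypotheses $n\ge 5$ and $1<p<n/4$ are precisely the range in which the Riesz potential of order $4$ --- equivalently $\cB_n$, by Sobolev's theorem quoted above --- maps $L_p$ boundedly into $L_q$ with $q=np/(n-4p)$. Since $f\in W_p^N\subset L_p$ and, by the bound $\|f_N\|_{L_p}\le C_N|f|_{W_p^N}$ together with \eqref{fullexp}, also $f_{h,\cD}\in L_p$, the difference $f_{h,\cD}-f$ belongs to $L_p$, so \eqref{BLq} applies and reduces the claim to estimating $\|f_{h,\cD}-f\|_{L_p}$.

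The second step is to substitute the quasi-interpolation error bound \eqref{estilp}. This is exactly where the assumption $N>n/p$ is genuinely used: it is what guarantees summability in $L_p$ of the series defining $f_N$ and hence the existence of the constant $C_N=C_N(\h,n,p)$ in $\|f_N\|_{L_p}\le C_N|f|_{W_p^N}$. Combining \eqref{BLq} with \eqref{estilp} then gives
\begin{align*}
\|\cB_n f_{h,\cD}-\cB_n f\|_{L_q}\le A_{p,q}\Big(&C_N(h\sqrt{\cD})^N|f|_{W_p^N}\\
&+\sum_{k=0}^{N-1}\frac{(h\sqrt{\cD})^k}{(2\pi)^k}\sum_{|\ba|=k}\frac{\|\epsilon_{\ba}(\cdot,\h,\cD)\|_{L_\infty}\,\|\partial^{\ba}f\|_{L_p}}{\ba!}\Big).
\end{align*}

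It remains to fix $\cD$. By \eqref{esteps}, for each multi-index $\ba$ one has $\|\epsilon_{\ba}(\cdot,\cD,\h)\|_{L_\infty}\le\sum_{\boldsymbol{\nu}\in\Z^n\setminus\boldsymbol{0}}|\partial^{\ba}\cF\h(\sqrt{\cD}\boldsymbol{\nu})|$, and since $\cF\h\in\cS(\R^n)$ this majorant tends to $0$ as $\cD\to\infty$. Only the finitely many multi-indices with $|\ba|\le N-1$ enter the sum, so, given $\e>0$, a single $\cD>0$ can be chosen for which $\|\epsilon_{\ba}(\cdot,\cD,\h)\|_{L_\infty}<\e$ holds simultaneously for all of them; plugging this into the display above and recognising $\sum_{|\ba|=k}\|\partial^{\ba}f\|_{L_p}/\ba!=\|\nabla_k f\|_{L_p}$ yields \eqref{estrough}. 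There is no deep obstacle here --- the argument is essentially an assembly of \eqref{BLq}, \eqref{estilp} and \eqref{esteps} --- and the only points deserving care are this uniform-in-$\ba$ choice of $\cD$ and making sure the $f_N$-estimate, hence the condition $N>n/p$, is genuinely invoked rather than tacitly assumed.
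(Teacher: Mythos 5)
Your argument is exactly the paper's: Theorem \ref{thm2.1} is obtained there by combining the boundedness estimate \eqref{BLq} with the quasi-interpolation bound \eqref{estilp} and then choosing $\cD$ via \eqref{esteps} so that all finitely many saturation terms with $|\boldsymbol{\alpha}|\le N-1$ are simultaneously below $\e$. Your additional remarks on where $N>n/p$ enters are consistent with the paper's discussion; nothing is missing.
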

It turns out, that under the conditions of Theorem \ref{thm2.1} the cubature formula \(\cB_n f_{h,\cD}\) 
converges to \(\cB_n f\). % with the error \(\cO(h^N+\eps h^4)\).
Since the biharmonic potential is a smoothing operator
and by \eqref{fullexp} the saturation error of the quasi-interpolant is a small, fast oscillating function, 
estimate \eqref{estrough} can be sharpened to the form that
\(\cB_n f_{h,\cD}\) approximates \(\cB_n f\) with the error \(\cO(h^N+\eps h^4)\).

We denote by \(H^s_p=H^s_p(\R^n)\) the Bessel potential space defined as the closure of compactly supported smooth functions with respect to the norm
\begin{equation*}\label{normHsp}
\Vert u\Vert_{H^s_p}=\Vert \cF^{-1}((1+4\pi^2|\cdot|^2)^{s/2} \cF u)\Vert_{L_p} =\Vert(1-\Delta)^{s/2}u\Vert_{L_p}\,.
\end{equation*}
We shall use the  error  estimate for the quasi-interpolant \eqref{fhD} in  the spaces \(H^s_p\)  
obtained in \cite[p.83]{MSbook}  which yields, in the case \(s=-4\), the following result.
\begin{thm}\label{estquasiint}\cite[p.83]{MSbook}
Suppose that \(\eta\in \cS(\R^n)\) satisfies the moments conditions  \eqref{moment} of order \(N\). Then, for any \(f\in H^L_p\), \(1<p<\infty\), \(L\geq N\geq 4\) with \(L> n/p\), there exist constants \(c_\eta\) and \(c_p\), not depending on \(f\) and \(h\) such that \(f_{h,\cD}\) defined in \eqref{fhD} satisfies
\begin{equation}\label{estH}
\begin{aligned}
\Vert f-f_{h,\cD}\Vert_{H^{-4}_p}\leq
c_\eta (h \sqrt{\cD})^N \Vert f \Vert_{H^L_p}+c_p h^4 \sum_{k=0}^{N-5} \frac{(h\sqrt{\cD})^k}{(2\pi)^{k+4}}\eps_k(\cD)
\sum_{|\a|=k}\Vert\partial^\a f\Vert_{H^4_p}
\end{aligned}
\end{equation}
with the numbers
\[
\eps_k(\cD)=\max_{|\a|=k} \sum_{\boldsymbol{\nu} \in {\Z}^n\setminus 0} 
\left|\partial^{\boldsymbol{\alpha}}\cF \eta(\sqrt{\cD} {\boldsymbol{\nu}})\right|\,.
\]
\end{thm}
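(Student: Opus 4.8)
\medskip
\noindent\textbf{Proof strategy.}
I would start from the exact representation \eqref{fullexp} of the quasi\-interpolation error,
\[
f_{h,\cD}-f=(-h\sqrt\cD)^{N}f_N+\sum_{|\ba|=0}^{N-1}\frac{(h\sqrt\cD)^{|\ba|}}{\ba!\,(2\pi i)^{|\ba|}}\,\de^{\ba}f\cdot\eps_{\ba}(\cdot,\h,\cD),
\]
with $\eps_{\ba}(\bx,\h,\cD)=\sum_{\boldsymbol\nu\in\Z^n\setminus0}\de^{\ba}\cF\h(\sqrt\cD\,\boldsymbol\nu)\,\ee^{2\pi i\langle\bx,\boldsymbol\nu\rangle/h}$ (see \eqref{defsigma}), and bound the consistency term $(-h\sqrt\cD)^Nf_N$ and the saturation sum separately in $H^{-4}_p$. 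For the consistency term I would simply use that the Bessel kernel $G_4$, whose Fourier transform is $(1+4\pi^2|\cdot|^2)^{-2}$, is nonnegative with unit mass, so that $\|u\|_{H^{-4}_p}=\|G_4*u\|_{L_p}\le\|u\|_{L_p}$; together with the $L_p$ bound for $f_N$ recalled before Theorem~\ref{thm2.1} (available here since $L>n/p$, which also makes the pointwise samples $f(h\bm)$ and the representation \eqref{fullexp} meaningful) and $|f|_{W^N_p}\le\|f\|_{H^N_p}\le\|f\|_{H^L_p}$, this gives $\|(-h\sqrt\cD)^Nf_N\|_{H^{-4}_p}\le c_\h(h\sqrt\cD)^N\|f\|_{H^L_p}$, which is the first term in \eqref{estH}.

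The heart of the matter is to see that, because the fast oscillating functions $\eps_{\ba}$ are measured in a space of negative smoothness, each saturation term gains an extra factor $h^4$. I would extract it by duality, $H^{-4}_p=(H^4_{p'})^{*}$, $1/p+1/p'=1$. Fix $\ba$ with $|\ba|=k$ and $k+4\le L$. For $\phi$ in a dense subset of $H^4_{p'}$, the Schwartz decay of $\cF\h$ makes the coefficients $\de^{\ba}\cF\h(\sqrt\cD\,\boldsymbol\nu)$ absolutely summable and $\de^{\ba}f\cdot\phi\in L_1$, so Fubini gives
\[
\langle\de^{\ba}f\cdot\eps_{\ba}(\cdot,\h,\cD),\phi\rangle=\sum_{\boldsymbol\nu\ne0}\de^{\ba}\cF\h(\sqrt\cD\,\boldsymbol\nu)\,\cF\bigl(\de^{\ba}f\cdot\phi\bigr)\!\Bigl(-\tfrac{\boldsymbol\nu}{h}\Bigr).
\]
Writing $\cF(\de^{\ba}f\cdot\phi)(\by)=(1+4\pi^2|\by|^2)^{-2}\cF\bigl((1-\D)^2(\de^{\ba}f\cdot\phi)\bigr)(\by)$, using $|\cF g|\le\|g\|_{L_1}$, the elementary inequality $(1+4\pi^2h^{-2}|\boldsymbol\nu|^2)^{-2}\le h^4(2\pi)^{-4}$ for $|\boldsymbol\nu|\ge1$, and the Leibniz--Hölder product bound $\|(1-\D)^2(\de^{\ba}f\cdot\phi)\|_{L_1}\le C\|\de^{\ba}f\|_{W^4_p}\|\phi\|_{W^4_{p'}}$ together with $W^4_p\simeq H^4_p$ for $1<p<\infty$, I would obtain
\[
\|\de^{\ba}f\cdot\eps_{\ba}(\cdot,\h,\cD)\|_{H^{-4}_p}\le c_p\,\frac{h^4}{(2\pi)^4}\Bigl(\sum_{\boldsymbol\nu\ne0}|\de^{\ba}\cF\h(\sqrt\cD\,\boldsymbol\nu)|\Bigr)\|\de^{\ba}f\|_{H^4_p}\le c_p\,\frac{h^4}{(2\pi)^4}\,\eps_k(\cD)\,\|\de^{\ba}f\|_{H^4_p}.
\]
Multiplying by $(h\sqrt\cD)^k(2\pi)^{-k}/\ba!$, summing over $|\ba|=k$ and over $0\le k\le N-5$ — for which $k+4\le N-1\le L$ guarantees $\de^{\ba}f\in H^4_p$ — reproduces exactly the saturation sum in \eqref{estH}.

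It then remains to dispose of the finitely many top indices $k$ with $k+4>L$ (i.e.\ $k\ge L-3\ge N-3$, at most three values for $N\ge4$), where $\de^{\ba}f$ need not belong to $H^4_p$. There I would run the same duality computation with $(1-\D)^{-(L-k)/2}$ in place of $(1-\D)^{-2}$, which is legitimate because $G_4=G_{L-k}*G_{4-(L-k)}$ with $G_{4-(L-k)}$ again a probability density, so $\|u\|_{H^{-4}_p}\le\|u\|_{H^{-(L-k)}_p}$; this yields a bound $\lesssim h^{L-k}\eps_k(\cD)\|\de^{\ba}f\|_{H^{L-k}_p}\lesssim h^{L-k}\eps_k(\cD)\|f\|_{H^L_p}$, so that the corresponding term of \eqref{fullexp} is $\lesssim\cD^{k/2}h^{L}\|f\|_{H^L_p}\le c_\h(h\sqrt\cD)^{N}\|f\|_{H^L_p}$ for $h\le1$, $\cD\ge1$, and is absorbed into the first term of \eqref{estH}. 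Adding the three contributions gives \eqref{estH}. The step I expect to be the main obstacle is the middle one: showing that passing to the negative\-order norm genuinely buys the factor $h^4$ (rather than, say, some fixed power of $|\boldsymbol\nu|$ lost in a commutator). The duality/Fourier\-decay route sketched above avoids Mikhlin multiplier estimates and reduces everything to the trivial product estimate in $W^m_1$, but one must keep careful track of the smoothness indices so that at each step the relevant derivative of $f$ lies in the space against which it is tested — which is precisely what the hypotheses $L\ge N\ge4$ and $L>n/p$ are there to secure.
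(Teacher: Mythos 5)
The paper does not actually prove this theorem: it is quoted directly from \cite[p.83]{MSbook}, so there is no in-paper argument to compare yours against. Judged on its own, your proof captures the correct mechanism: start from \eqref{fullexp}, bound the remainder term via $\Vert u\Vert_{H^{-4}_p}=\Vert G_4\ast u\Vert_{L_p}\le\Vert u\Vert_{L_p}$, and extract the factor $h^4$ from each oscillating saturation term by duality against $H^4_{p'}$, moving four derivatives onto $\de^{\ba}f\cdot\phi$ and using $(1+4\pi^2h^{-2}|\boldsymbol\nu|^2)^{-2}\le h^4(2\pi)^{-4}$ for $|\boldsymbol\nu|\ge 1$. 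For $0\le k\le N-5$ this is complete and yields exactly the stated saturation sum; the treatment of the consistency term is also fine.

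The gap is in the leftover indices $k=N-4,\dots,N-1$, which must be absorbed into the $(h\sqrt\cD)^N$ term. First, your case split --- ``$0\le k\le N-5$'' handled by the $h^4$-gain, ``$k+4>L$'' handled by the fractional trick --- leaves the range $N-4\le k\le L-4$ uncovered (nonempty whenever $L\ge N$; even for $L=N$ the index $k=N-4$ falls through). These terms are harmless, but you must say why: the integer-order argument still gives $h^4(h\sqrt\cD)^k\eps_k(\cD)\,\Vert f\Vert_{H^L_p}\le c\,(h\sqrt\cD)^N\Vert f\Vert_{H^L_p}$ because $k+4\ge N$, $h\le1$, $\cD\ge1$. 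Second, and more seriously, for the indices with $L-k<4$ and $N-k$ odd (e.g.\ $L=N$, $k=N-1$, where you must gain $h^{1}$ from only one available derivative) your replacement of $(1-\D)^{-2}$ by $(1-\D)^{-(L-k)/2}$ requires $\Vert(1-\D)^{s/2}(\de^{\ba}f\cdot\phi)\Vert_{L_1}\lesssim\Vert\de^{\ba}f\Vert_{H^s_p}\Vert\phi\Vert_{H^s_{p'}}$ for \emph{non-integer} $s$. That is a fractional Leibniz (Kato--Ponce) inequality at the $L_1$ endpoint, not ``the trivial product estimate in $W^m_1$''; it is true, and can alternatively be obtained by Stein interpolation of the analytic family $z\mapsto(1-\D)^{-z}\bigl(\ee^{2\pi i\langle\cdot,\boldsymbol\nu\rangle/h}(1-\D)^{-z}u\bigr)$ between $z=0$ and $z=2$, but as written this step is asserted rather than proved, and it is precisely the point where the smoothness bookkeeping you yourself flag as delicate actually bites.
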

Theorem \ref{estquasiint} leads to the following error estimate for the quasi-interpolation procedure.
\begin{thm}
Suppose that \(\eta\in \cS(\R^n)\) satisfies the moments conditions  \eqref{moment} of order \(N\).
Let \(n\geq 5\), \(1<p<n/4\), \(q=np/(n-4p)\) and \(f\in W^L_p\) with \(L\geq N\geq 4\) and \(L>n/p\). Then there exist constants \(c_\eta\) , \(c_p\) and \(c_q\), not depending on \(f, h, \cD\), such that
\[
\begin{aligned}
\Vert \cB_n f-\cB_n f_{h,\cD}\Vert_{L_q} \leq 
c_\eta (h \sqrt{\cD})^N&\Vert f \Vert_{W^L_p}+\\
h^4 \sum_{k=0}^{N-5} (h\sqrt{\cD})^k \frac{\eps_k(\cD)}{(2\pi)^{k+2}}& \sum_{l=0}^4\left(
A_{p,q} c_p |f|_{W^{l+k}_p}+c_q |f|_{W^{l+k}_q}\right)\,.
\end{aligned}
\]
\end{thm}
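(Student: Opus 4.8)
The plan is to use that \(\cB_n\) coincides with the Riesz potential \(I_4\) of order \(4\): using \(\Gamma(n/2)=\tfrac{(n-2)(n-4)}{4}\,\Gamma((n-4)/2)\) one checks that the constant in \eqref{biharm} equals \(1/\gamma(4)\), where \(\gamma(4)=2^4\pi^{n/2}\Gamma(2)/\Gamma((n-4)/2)\) is the normalisation of \(I_4\); hence \(\cB_n=I_4\) has Fourier multiplier \((2\pi|\x|)^{-4}\), the harmonic potential \(I_2\) has multiplier \((2\pi|\x|)^{-2}\), and by Hardy--Littlewood--Sobolev both \(I_2\colon L_r\to L_{nr/(n-2r)}\) (\(1<r<n/2\)) and \(\cB_n=I_4\colon L_r\to L_{nr/(n-4r)}\) (\(1<r<n/4\)) are bounded. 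Writing \(y=(2\pi|\x|)^2\), from the elementary identity \((1+y)^2/y^2=1+2/y+1/y^2\) one gets the operator identity \(\cB_n(1-\D)^2=\mathrm{Id}+2I_2+\cB_n\); applying \(\cB_n=\big(\mathrm{Id}+2I_2+\cB_n\big)(1-\D)^{-2}\) to \(f-f_{h,\cD}\) and putting \(w:=(1-\D)^{-2}(f-f_{h,\cD})\) this yields
\[
\cB_n f-\cB_n f_{h,\cD}=w+2I_2w+\cB_n w,\qquad \|w\|_{L_r}=\|f-f_{h,\cD}\|_{H^{-4}_r}\ \text{ for every }r .
\]

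The three summands are then estimated in \(L_q\). For \(\cB_n w\), HLS gives \(\|\cB_n w\|_{L_q}\le A_{p,q}\|w\|_{L_p}=A_{p,q}\|f-f_{h,\cD}\|_{H^{-4}_p}\), and Theorem \ref{estquasiint} (usable because \(W^L_p=H^L_p\) for integer \(L\), \(1<p<\infty\)) together with \(\sum_{|\a|=k}\|\partial^\a f\|_{H^4_p}\le C\sum_{l=0}^4|f|_{W^{l+k}_p}\) produces the terms carrying \(A_{p,q}c_p|f|_{W^{l+k}_p}\). For the bare term \(w\), \(\|w\|_{L_q}=\|f-f_{h,\cD}\|_{H^{-4}_q}\); since \(q>p\) forces \(L>n/p>n/q\), Theorem \ref{estquasiint} applies with \(p\) replaced by \(q\) (this tacitly uses \(f\in W^L_q\), automatic e.g. for compactly supported \(f\)), giving the terms with \(c_q|f|_{W^{l+k}_q}\). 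The cross term \(2I_2w\) is the delicate one: \(I_2\) maps \(L_{q_2}\to L_q\) with \(q_2:=np/(n-2p)\), and here \(q_2<2p<n/2\) because \(p<n/4\), while \(1/q_2=\tfrac12(1/p+1/q)\). Hence, by Hölder's inequality, \(\|w\|_{L_{q_2}}\le\|w\|_{L_p}^{1/2}\|w\|_{L_q}^{1/2}\le\tfrac12(\|w\|_{L_p}+\|w\|_{L_q})\), so \(\|2I_2w\|_{L_q}\) is controlled by a constant times \(\|f-f_{h,\cD}\|_{H^{-4}_p}+\|f-f_{h,\cD}\|_{H^{-4}_q}\), to which both versions of Theorem \ref{estquasiint} apply.

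Summing the three contributions, merging the \((h\sqrt\cD)^N\)-terms (each of the form \((h\sqrt\cD)^N\) times a Sobolev norm of \(f\)) into the leading term and absorbing all remaining numerical factors into the constants \(c_\eta,c_p,c_q\), gives the asserted estimate. The main obstacle — and the reason that both \(L_p\)- and \(L_q\)-Sobolev norms of \(f\) must appear on the right-hand side — is the harmonic-potential cross term \(2I_2w\): the Sobolev embedding sends it into the intermediate exponent \(q_2\ne q\), and one has to interpolate between \(L_p\) and \(L_q\) to recover the target space. A second point is that the full factor \(h^4\) has to survive on every saturation term, which is why one must keep the \emph{two} Bessel powers \((1-\D)^{-2}\) as the rightmost operator throughout (so that estimate \eqref{estH}, which already contains \(h^4\), can be quoted directly for all three summands); the alternative one-power splitting \(\cB_n=(\cB_n+I_2)(1-\D)^{-1}\) would only deliver an \(h^2\) gain.
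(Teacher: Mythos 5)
Your proof is correct and follows essentially the same route as the paper: insert a fourth-order elliptic regularization so that $\Vert\cB_n(f-f_{h,\cD})\Vert_{L_q}$ is reduced to the two quantities $\Vert f-f_{h,\cD}\Vert_{H^{-4}_p}$ and $\Vert f-f_{h,\cD}\Vert_{H^{-4}_q}$, bound the Riesz-potential piece by the $L_p\to L_q$ boundedness of $\cB_n$, and quote Theorem~\ref{estquasiint} for both exponents. The only real difference is the choice of regularizer: the paper writes $\cB_n=\cB_n(I-\Delta\Delta)(I-\Delta\Delta)^{-1}$, which, using $\cB_n\Delta\Delta=I$, produces exactly the two terms $\cB_n w$ and $w$ at the cost of the norm equivalence $(1+4\pi^2|\xi|^2)^2\asymp 1+16\pi^4|\xi|^4$, whereas your choice $(1-\Delta)^{-2}$ identifies the $H^{-4}$-norms exactly but creates the extra cross term $2I_2w$, which you handle correctly via Hardy--Littlewood--Sobolev for $I_2$ at the intermediate exponent $q_2$ and interpolation between $L_p$ and $L_q$. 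One small inaccuracy in your closing commentary: the cross term is not the reason both $L_p$- and $L_q$-based norms of $f$ must appear on the right-hand side --- in the paper's two-term version they appear anyway, because the identity part $w$ has to be measured directly in $L_q$ while $\cB_n w$ is measured through $L_p$.
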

\begin{proof}
Since
\[
\cB_n f-\cB_n f_{h,\cD}=\cB_n (f-f_{h,\cD})
\]
we obtain, keeping in mind \eqref{BLq},
\[
\begin{aligned}
\Vert \cB_n (f-f_{h,\cD})\Vert_{L_q} &=
\Vert \cB_n (I-\Delta\Delta)  (I-\Delta\Delta)^{-1} (f-f_{h,\cD})\Vert_{L_q}\\
&\leq
\Vert \cB_n   (I-\Delta\Delta)^{-1} (f-f_{h,\cD})\Vert_{L_q}+\Vert \cB_n \Delta\Delta  (I-\Delta\Delta)^{-1} (f-f_{h,\cD})\Vert_{L_q}\\
&\leq A_{p,q}\Vert   (I-\Delta\Delta)^{-1} (f-f_{h,\cD})\Vert_{L_p}+\Vert  (I-\Delta\Delta)^{-1} (f-f_{h,\cD})\Vert_{L_q} \, .
\end{aligned}
\]
Since \((1+4\pi^2 |\xi|^2)^2\) can be bounded from above and from below by \((1+16\pi^4 |\xi|^4)\), the norm in \(H^{-4}_p\) is equivalent to 
\[
\Vert \cF^{-1}((1+16\pi^4|\cdot|^4)^{-1} \cF u)\Vert_{L_p} =\Vert(1-\Delta\Delta)^{-1}u\Vert_{L_p}\quad 
\]
and we deduce that
\begin{equation*} 
\begin{aligned}
\Vert \cB_n (f-f_{h,\cD})\Vert_{L_q} \leq  A_{p,q} \Vert f-f_{h,\cD}\Vert_{H_p^{-4}}+
\Vert f-f_{h,\cD}\Vert_{H_q^{-4}} \, .
\end{aligned}
\end{equation*}
The condition \(p<n/4\) ensures that \(W_p^L\) is continuously embedded in \(W_q^{L-4}\) (\cite[p.124]{stein}). Hence by  the  estimate  \eqref{estH} the assertion follows immediately.
\end{proof}

\section{Action on Gaussians
}\label{sec3}
\setcounter{equation}{0}

Consider the generating function \(\eta_2(\bx)=\pi^{-n/2}\ee^{-|\bx|^2}\). The moment conditions \eqref{moment} are fulfilled with \(N=2\). If we replace $f$ in \eqref{biharm} by
\begin{equation*}
f_{h,\cD}(\bx)=(\pi \cD)^{-n/2} \sum_{\bm\in\Z^n} f(h \bm) \ee^{-|\bx-h\bm|^2/(h^2\cD)} \, , 
\end{equation*}
then we obtain a cubature for \eqref{biharm}
\begin{equation}\label{biharm2}
\cB_n f_{h,\cD}(\bx)=\frac{(h\sqrt{\cD})^{4}}{(\pi \cD)^{n/2}} \sum_{\bm\in\Z^n} f(h\bm)\Phi_2\left(\frac{\bx-h\bm}{h\sqrt{\cD}}\right)
\end{equation}
with
\[
\Phi_2(\bx):=\cB_n( \ee^{-|\cdot|^2})(\bx).
\]
\begin{thm}%\label{1F1}
Let  $n\geq 3, n\neq 4$. The biharmonic potential acting on the Gaussian allows the following  representation
\begin{equation}\label{rapr}
\cB_n( \ee^{-|\cdot|^2})(\bx)=\frac{1}{4(n-2)(n-4) } 
 {_1F_1}(\frac {n-4}{2},\frac n2,-|\bx|^2) \, ,
\end{equation}
where \(_1F_1\) denotes the Kummer or confluent hypergeometric  function.
\end{thm}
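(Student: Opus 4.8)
The plan is to reduce the claim to the evaluation, for a complex parameter $\lambda$ with $\re\lambda<n$, of the Gaussian integral
\[
I_\lambda(\bx):=\itg_{\R^n}\frac{\ee^{-|\by|^2}}{|\bx-\by|^{\lambda}}\,d\by ,
\]
and then to set $\lambda=n-4$. Indeed, by \eqref{biharm} one has $\cB_n(\ee^{-|\cdot|^2})(\bx)=\frac{\Gamma(n/2)}{4\pi^{n/2}(n-2)(n-4)}\,I_{n-4}(\bx)$, and $n-4<n$ for every $n$, so $I_{n-4}$ makes sense. The integral $I_\lambda(\bx)$ converges absolutely for $\re\lambda<n$ (near $\by=\bx$ the factor $|\bx-\by|^{-\lambda}$ is locally integrable precisely when $\re\lambda<n$, while the Gaussian takes care of the decay at infinity for all $\lambda$), and a standard differentiation-under-the-integral/Morera argument shows that $\lambda\mapsto I_\lambda(\bx)$ is holomorphic on the half-plane $\{\re\lambda<n\}$.

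To obtain a closed form I would first work in the strip $0<\re\lambda<n$, where the subordination identity $r^{-\lambda}=\Gamma(\lambda/2)^{-1}\itg_0^\infty t^{\lambda/2-1}\ee^{-tr^2}\,dt$ holds for $r>0$. Inserting it with $r=|\bx-\by|$, interchanging the integrals (legitimate by absolute convergence on this strip), and completing the square via $|\by|^2+t|\bx-\by|^2=(1+t)\big|\by-\tfrac{t}{1+t}\bx\big|^2+\tfrac{t}{1+t}|\bx|^2$ to evaluate the inner Gaussian integral, one gets
\[
I_\lambda(\bx)=\frac{\pi^{n/2}}{\Gamma(\lambda/2)}\itg_0^\infty \frac{t^{\lambda/2-1}}{(1+t)^{n/2}}\,\ee^{-\frac{t}{1+t}|\bx|^2}\,dt .
\]
The substitution $u=t/(1+t)$ turns the right-hand side into $\dfrac{\pi^{n/2}}{\Gamma(\lambda/2)}\itg_0^1 u^{\lambda/2-1}(1-u)^{(n-\lambda)/2-1}\ee^{-u|\bx|^2}\,du$, and comparison with the Euler integral ${_1F_1}(a,b,z)=\frac{\Gamma(b)}{\Gamma(a)\Gamma(b-a)}\itg_0^1 \ee^{zt}t^{a-1}(1-t)^{b-a-1}\,dt$ — valid here with $a=\lambda/2$, $b=n/2$, $z=-|\bx|^2$ since $0<\re\lambda<n$ — yields
\[
I_\lambda(\bx)=\pi^{n/2}\,\frac{\Gamma((n-\lambda)/2)}{\Gamma(n/2)}\,{_1F_1}\!\Big(\tfrac{\lambda}{2},\tfrac n2,-|\bx|^2\Big),\qquad 0<\re\lambda<n.
\]

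Next I would remove the restriction on $\lambda$ by analytic continuation. The left-hand side is holomorphic on $\{\re\lambda<n\}$ by the first paragraph; the right-hand side is too, because ${_1F_1}(\lambda/2,n/2,-|\bx|^2)$ is entire in $\lambda$ and $\Gamma((n-\lambda)/2)$ has poles only at $\lambda\in\{n,n+2,n+4,\dots\}$. Since the two sides agree on the non-empty open strip $0<\re\lambda<n$, the identity theorem gives equality on all of $\{\re\lambda<n\}$. Specializing to $\lambda=n-4$ makes $\Gamma((n-\lambda)/2)=\Gamma(2)=1$, so $I_{n-4}(\bx)=\pi^{n/2}\Gamma(n/2)^{-1}\,{_1F_1}\big(\tfrac{n-4}{2},\tfrac n2,-|\bx|^2\big)$, and multiplying by the constant in \eqref{biharm} gives exactly \eqref{rapr}.

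The only point that genuinely needs the continuation step rather than a direct computation is $n=3$: there $\lambda=n-4=-1$ lies outside the strip in which the subordination formula and the Euler representation of ${_1F_1}$ are legitimate, so the result at $n=3$ can be reached only by continuation — which is the reason for carrying a free parameter $\lambda$ from the start instead of substituting $\lambda=n-4$ at once. (For $n=4$ the constant $\frac{1}{4(n-2)(n-4)}$ is undefined, which is why that case is excluded.) Everything else — Fubini, the Gaussian integral, the change of variable, and matching the Euler integral — is routine; the one place to be careful is the exponent bookkeeping $-(\tfrac{\lambda}{2}-1)-2+\tfrac n2=\tfrac{n-\lambda}{2}-1$ after the substitution $u=t/(1+t)$.
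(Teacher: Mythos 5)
Your proof is correct, and it takes a genuinely different route from the paper's. The paper argues in two separate regimes: for \(n\geq 5\) it passes to Fourier transforms of the two radial factors via \eqref{2.15}, reduces the convolution to the Hankel-type integral \(\itg_0^\infty \ee^{-\pi^2 r^2}J_{n/2-1}(2\pi r|\bx|)\,r^{n/2-4}dr\), and identifies that with \({}_1F_1\) by a table formula from Bateman--Erd\'elyi; for \(n=3\) the Fourier route is unavailable (the kernel \(|\bx-\by|\) grows, and the quoted formula for \(\cF(|\cdot|^{4-n})\) needs \(n>4\)), so the paper instead uses the Bessel-function convolution formula \eqref{congaus} with \(Q(r)=-r/(8\pi)\), obtains the explicit \(\erf\)-expression \eqref{dim3M1}, and only then recognizes it as \(-\tfrac14\,{}_1F_1(-\tfrac12,\tfrac32,-|\bx|^2)\). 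You avoid the case split entirely by carrying a free exponent \(\lambda\): the subordination identity for \(|\bx-\by|^{-\lambda}\), a direct Gaussian integration, and Euler's integral \eqref{int1F1} give the formula on the strip \(0<\re\lambda<n\), and analytic continuation in \(\lambda\) (the only nontrivial step, needed precisely because \(\lambda=n-4=-1\) falls outside the strip when \(n=3\)) yields all cases at once. What your approach buys is a single uniform, self-contained proof with no reliance on tabulated Hankel transforms, plus, as a by-product at \(\lambda=n-4\), essentially the one-dimensional separated representation \eqref{rapr2} that the paper later rederives from \eqref{rapr} by running Euler's integral backwards in Theorem \ref{thm5.1}; what the paper's route buys is the closed \(\erf\)-form for \(n=3\), which it needs anyway for the cubature formulas. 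Your bookkeeping --- the convergence strip, the location of the poles of \(\Gamma((n-\lambda)/2)\), and the exponent \((n-\lambda)/2-1\) after the substitution \(u=t/(1+t)\) --- all checks out.
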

\begin{proof}
The cubature of the \(3\)-dimensional biharmonic potential \(\cB_3 f\)
is considered in \cite[p.119]{MSbook}. To determine the action of the biharmonic potential on the Gaussian the general formula 
 \begin{equation} \label{congaus}
(Q\ast\ee^{-|\cdot|^2})(\bx)=
\frac{2\, \pi^{n/2}\,
\ee^{-|\bx|^{2}}}{|\bx|^{n/2-1}}
\int_{0}^{\infty}Q(r)  \ee^{-r^{2}}\, I_{n/2-1}(2 r |\bx|) \, r^{n/2}dr \,
     \end{equation}
with the  modified Bessel functions of the first kind $I_n$ (\cite[p.374]{AS}) and  \(Q(r)=-r/(8\pi)\) is used. Then  \eqref{congaus} gives
\begin{equation}\label{dim3M1}
\cB_3(\ee^{-|\cdot|^2})(\bx)=- \frac{\ee^{-|\bx|^2}}{8}-\frac{\sqrt{\pi}}{16}\frac{\erf(|\bx|)}{|\bx|} (2|\bx|^2+1)\,.
\end{equation}
\eqref{dim3M1} can also be expressed by means of the confluent hypergeometric functions as
\begin{equation*}\label{dim3M1bis}
\cB_3(\ee^{-|\cdot|^2})(\bx)=-\frac{1}{4}  {_1F_1}(-\frac 12,\frac 32,-|\bx|^2)\,.
\end{equation*}
Let \(n\geq 5\). The convolution of two radial functions can be transformed to a one-dimensional integral by using the Fourier transforms of the radial functions. Indeed
 (cf. \cite[(2.15) p.22]{MSbook})
\begin{equation}\label{2.15}
\int_{\R^n}Q(|\bx-\by|) f(|\by|)d\by=
\frac{2\pi}{|\bx|^{n/2-1}}\itg_0^\infty \cF Q(r) \cF f(r) J_{n/2-1}(2\pi r |\bx|) r^{n/2}dr\,.
\end{equation} 
Since \(\cF(\ee^{-|\cdot|^2})(\bx)=\pi^{n/2}\ee^{-\pi^2 |\bx|^2}\) and 
\(
\cF(|\cdot|^{4-n})(\bx)= \frac{\pi^{n/2-4}}{\Gamma\left(\frac{n-4}{2}\right)} |\bx|^{-4}
\)
(cf. \cite[p.156]{neri}) we have from \eqref{2.15} that
\[
\begin{aligned}
\cB_n( \ee^{-|\cdot|^2})(\bx)=
&\frac{\pi^{n/2-3}}{8 |\bx|^{n/2-1}} \itg_0^\infty \ee^{-\pi^2r^2} J_{n/2-1}(2 \pi r |\bx|) r^{n/2-4}dr \, ,
\end{aligned}
\]
where we used the relation
\[
\Gamma\left(\frac n2\right)=\frac{(n-2)(n-4)}{4}\Gamma\left(\frac{n-4}{2}\right)\,.
\]
This integral can be expressed by means of the Kummer or confluent hypergeometric function
\(
_1F_1(a,c,z)
\)  (cf. \cite[(8.6.14)]{MOT}). \eqref{rapr} follows.
\end{proof}
In particular, for \(n=5\), \eqref{rapr} gives
\[
\cB_5( \ee^{-|\cdot|^2})(\bx)=\frac{1}{16}\left(\frac{\ee^{-|\bx|^2}}{|\bx|^2}+{\sqrt{\pi}}\frac{\erf(|\bx|)}{2|\bx|^3}(2|\bx|^2-1)\right)
\]
and, for  \(n=6\), (cf. \cite[13.6]{AS}).
\[
\cB_6( \ee^{-|\cdot|^2})(\bx)= \frac{\ee^{-|\bx|^2}-1+|\bx|^2}{16 |\bx|^4}\, .
\]
In dimension \(n=4\) the biharmonic potential has the form
\begin{equation*}\label{biharm4}
\cB_4 f(\bx)=-\frac{1}{4\pi^2} \int_{\R^4}\ln |\bx-\by|f(\by)d\by
\end{equation*}
and the following representation formula holds.
\begin{thm} The biharmonic potential acting on the Gaussian function admits the representation
\begin{equation}\label{biharm4exp}
\cB_4(\ee^{-|\cdot|^2})(\bx)=\frac{1}{16}\left(\frac{\ee^{-|\bx|^2}-1}{|\bx|^2}
-2\log|\bx|-E_1(|\bx|^2)
\right)
\end{equation}
where \(E_1(r)\) is the exponential integral 
\begin{equation*}\label{expint}
E_1(r)=\int_r^\infty\frac{\ee^{-t}}{t}dt\,.
\end{equation*}
\end{thm}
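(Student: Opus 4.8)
The plan is to reduce \eqref{biharm4exp} to the evaluation of a single explicit one--dimensional integral, as was done above for the cases \(n=3\) and \(n\ge5\). Since the kernel \(-\tfrac{1}{4\pi^2}\log|\bx|\) and the Gaussian are both radial, one possibility is to follow the three--dimensional argument verbatim: apply the convolution formula \eqref{congaus} with \(n=4\) and \(Q(r)=-\log r/(4\pi^2)\) to get
\[
\cB_4(\ee^{-|\cdot|^2})(\bx)=-\frac{\ee^{-|\bx|^2}}{2|\bx|}\int_0^\infty r^{2}\,\log r\;\ee^{-r^2}\,I_1(2r|\bx|)\,dr .
\]
I would instead prefer a route that avoids both Bessel--function tables and the rather delicate distributional Fourier transform of \(\log|\cdot|\) in \(\R^4\) that would be needed to use \eqref{2.15}. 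For this I would start from the Frullani representation \(\log|\bx|=\tfrac12\int_0^\infty t^{-1}\big(\ee^{-t}-\ee^{-t|\bx|^2}\big)\,dt\), insert it into \(\cB_4(\ee^{-|\cdot|^2})(\bx)=-\tfrac1{4\pi^2}\int_{\R^4}\log|\bx-\by|\,\ee^{-|\by|^2}\,d\by\), and interchange the order of integration. This is justified because \(\int_0^\infty t^{-1}\big|\ee^{-t}-\ee^{-t|\bx-\by|^2}\big|\,dt=2\big|\log|\bx-\by|\big|\) and \(\big|\log|\bx-\by|\big|\,\ee^{-|\by|^2}\) is integrable over \(\R^4\) for fixed \(\bx\). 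Combining this with the Gaussian convolution identity \(\int_{\R^4}\ee^{-t|\bx-\by|^2}\ee^{-|\by|^2}\,d\by=\pi^2(1+t)^{-2}\ee^{-t|\bx|^2/(1+t)}\) and with \(\int_{\R^4}\ee^{-|\by|^2}\,d\by=\pi^2\), the potential becomes a fixed constant multiple of \(G(|\bx|^2)\), where
\[
G(\mu):=\int_0^\infty\frac1t\left(\ee^{-t}-\frac{\ee^{-t\mu/(1+t)}}{(1+t)^{2}}\right)dt .
\]

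The core of the argument is to evaluate \(G\). Differentiating under the integral sign with respect to \(\mu\) kills the divergent \(\ee^{-t}\) term, and the substitution \(u=t/(1+t)\) leaves an elementary integral:
\[
G'(\mu)=\int_0^1(1-u)\,\ee^{-u\mu}\,du=\frac{\mu-1+\ee^{-\mu}}{\mu^{2}}=\frac1\mu-\frac1{\mu^{2}}+\frac{\ee^{-\mu}}{\mu^{2}} .
\]
Integrating back, the last summand by parts and using \(\int\mu^{-1}\ee^{-\mu}\,d\mu=-E_1(\mu)+\mathrm{const}\), gives
\[
G(\mu)=\log\mu+\frac{1-\ee^{-\mu}}{\mu}+E_1(\mu)+C .
\]
Putting \(\mu=|\bx|^2\), using \(\log|\bx|^2=2\log|\bx|\), and restoring the constant multiple, this is precisely the right--hand side of \eqref{biharm4exp}, provided the integration constant \(C\) is shown to vanish.

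Pinning down \(C\) is the only genuinely delicate step. I would compare the two descriptions of \(G\) at \(\mu=0\). On one side, letting \(\mu\to0^+\) in the closed form and using \(E_1(\mu)=-\gamma-\log\mu+O(\mu)\), the logarithmic singularities cancel and \((1-\ee^{-\mu})/\mu\to1\), so \(G(\mu)\to1-\gamma+C\). On the other side, \(G(0)=\int_0^\infty t^{-1}\big(\ee^{-t}-(1+t)^{-2}\big)\,dt\) is a convergent integral which I would evaluate by analytic continuation of the Mellin transforms \(\int_0^\infty t^{z-1}\ee^{-t}\,dt=\Gamma(z)\) and \(\int_0^\infty t^{z-1}(1+t)^{-2}\,dt=\Gamma(z)\Gamma(2-z)\): their difference equals \(\Gamma(z)\big(1-\Gamma(2-z)\big)\) for \(0<z<1\), and the limit \(z\to0^+\), using \(\Gamma(z)=z^{-1}-\gamma+O(z)\) and \(\Gamma(2-z)=1-(1-\gamma)z+O(z^2)\), yields \(G(0)=1-\gamma\). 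Comparing the two forces \(C=0\) and finishes the proof. (If one follows the \eqref{congaus} route instead, the analogous obstacle is the integral \(\int_0^\infty r^{2}\log r\,\ee^{-r^2}I_1(2r|\bx|)\,dr\), obtained by differentiating the classical formula \(\int_0^\infty r^{\mu-1}\ee^{-r^2}I_1(2r|\bx|)\,dr=\tfrac12\,\Gamma\big(\tfrac{\mu+1}{2}\big)\,|\bx|\;{_1F_1}\big(\tfrac{\mu+1}{2};2;|\bx|^2\big)\) with respect to \(\mu\) at \(\mu=3\); after resumming the resulting power series in \(|\bx|^2\) — the handling of the arising \(\psi\)-function terms being routine — one recovers the same closed form, hence \eqref{biharm4exp}.)
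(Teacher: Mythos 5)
Your argument is correct in substance but follows a genuinely different route from the paper. The paper characterizes $g(r)=\cB_4(\ee^{-|\cdot|^2})(\bx)$, $r=|\bx|$, as the radial solution of $\Delta^2 g=\ee^{-r^2}$ with prescribed behaviour at $0$ and at infinity; using the known harmonic potential $\cL_4(\ee^{-|\cdot|^2})=(\ee^{-r^2}-1)/(4r^2)$ it reduces this to the second-order problem $g''+3g'/r=(\ee^{-r^2}-1)/(4r^2)$, $g(0)=(\gamma-1)/16$, $g'(0)=0$, and integrates twice. You instead compute the convolution head-on: Frullani's formula for the logarithm, the Gaussian convolution identity, differentiation of $G$ under the integral sign, and a Mellin-transform evaluation of $G(0)$ to fix the constant of integration. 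Your intermediate formulas all check out: $G'(\mu)=(\mu-1+\ee^{-\mu})/\mu^2$, the antiderivative $\log\mu+(1-\ee^{-\mu})/\mu+E_1(\mu)+C$, and $G(0)=1-\gamma$, forcing $C=0$. Your route is self-contained (the paper's proof imports the value of $\cL_4(\ee^{-|\cdot|^2})$ and the normalization $g(0)=-\tfrac14\int_0^\infty r^3\log r\,\ee^{-r^2}dr$ without derivation, while your constant is pinned down from the integral representation itself), whereas the ODE route extends more directly to the higher-order basis functions of Section~\ref{sec4}.

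One concrete point to repair: the prefactor. Starting from the kernel $-\tfrac{1}{4\pi^2}\log|\bx-\by|$ that you copied from the paper's displayed definition of $\cB_4$, your computation yields $-\tfrac18 G(|\bx|^2)$, i.e.\ \emph{twice} the right-hand side of \eqref{biharm4exp}, so the claim that you land ``precisely'' on \eqref{biharm4exp} does not follow from your stated starting point. The fault lies with the paper's displayed kernel, not with your analysis: since $\Delta\log|\bx|=2|\bx|^{-2}$ in $\R^4$, the inverse of $\Delta^2$ has kernel $-\tfrac{1}{8\pi^2}\log|\bx-\by|$, and it is this normalization that the theorem, the paper's own proof (via $\Delta^2g=\ee^{-r^2}$ and $g(0)=(\gamma-1)/16$), and the $n=4$ formula for $\Phi_{2M}$ in Section~\ref{sec4} all use. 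State the normalization you adopt and carry $-\tfrac{1}{16}$ in front of $G$; then your identity for $G$ delivers \eqref{biharm4exp} exactly.
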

\begin{proof}  We use that the radial function \(g(r)=\Phi_2(\bx)\), \(r=|\bx|\),  is  solution of the differential equation
\[\frac{1}{r^{3}}\frac{d}{dr}\left(r^{3} \frac{d}{dr}  \left(\frac{1}{r^{3}} \frac{d}{d r}\left( r^{3}\frac{d}{dr} \right)\right) \right)g(r)= \ee^{-r^2}, \quad r>0 \, ,
\]
satisfying the conditions
\[
g(r)\approx -\frac{1}{8}\log r \quad as \quad r\to \infty \, ,
\]
\[
g(0)=-\frac{1}{4} \int_0^\infty r^3 \log(r) \ee^{-r^2}dr=
\frac{\gamma-1}{16},\quad g'(0)=0
\]
with the Euler constant \(\gamma\).  Denote by \(\cL_n\) the inverse of the Laplace operator, the harmonic potential
\[
(\cL_n f)(\bx)=-\frac{\Gamma(n/2-1)}{4\pi^{n/2}}\int_{\R^n}\frac{f(y)}{|\bx-\by|^{n-2}}d\by \, ,
\]
which provides the unique solution of the Poisson equation
\[
\Delta u=f\quad {\rm in}\quad \R^n,\qquad |u(\bx)|\leq c|\bx|^{n-2}\quad |\bx|\to \infty \, .
\]
Hence we have
\begin{equation*}\label{form}
 \frac{1}{r^{3}} \frac{d}{d r}\left( r^{3}\frac{d}{dr} \right)g=\cL_4(\ee^{-|\cdot|^2})(\bx)\,.
\end{equation*}
From the relation (cf. \cite[p.75]{MSbook})
\[
\cL_4(\ee^{-|\cdot|^2})(\bx) =\frac{\ee^{-|\bx|^2}-1}{4 |\bx|^2}
\]
we deduce that \(g\) solves
\[
g''(r)+3 \frac{ g'(r)}{r}=\frac{\ee^{-r^2}-1}{4 r^2},\quad
 g(0)=
\frac{\gamma-1}{16},\quad g'(0)=0.
\]
We obtain
\[
g'(r)=\frac{1}{8}\left(\frac{1}{r^3}-\frac{\ee^{-r^2}}{r^3}-\frac{1}{r} \right)
\]
and, finally
\[
\begin{aligned}
g(r)&=\frac{1}{8}\itg_0^r \left(\frac{1}{s^3}-\frac{\ee^{-s^2}}{s^3}-\frac{1}{s} \right)ds+g(0)=\frac{1}{16}\itg_0^{r^2} \left(\frac{1}{t}-\frac{\ee^{-t}}{t}-1 \right)\frac{dt}{t}\\&+g(0)
=\frac{1}{16}\left(\frac{\ee^{-r^2}-1}{r^2}-E_1(r^2)-2 \log r
\right)\,
\end{aligned}
\]
with the exponential integral \(E_1\) (\cite[5.1.1]{AS}).
\end{proof}
\eqref{biharm2}, together with \eqref{rapr} and \eqref{biharm4exp}, gives rise to second order semi-analytic cubature formulas for the biharmonic operator in any dimension \(n\geq 3\).

\section{Action on higher-order basis functions}\label{sec4}
\setcounter{equation}{0}

Now we consider the biharmonic potential
\[
\Phi_{2M}(\bx):=\cB_n(L_{M-1}^{(n/2)}(|\cdot|^2)\ee^{-|\cdot|^2})(\bx)\]
of the radial function \(L_{M-1}^{(n/2)}(|\bx|^2)\ee^{-|\bx|^2}\) with  the generalized Laguerre polynomials 
\[
L_k^{(\gamma)}(y)=\frac{\ee^y y^{-\gamma}}{k!} \left( \frac{d}{dy}\right)^k (\ee^{-y} y^{k+\gamma}),\quad \gamma>-1\,.
\]
The radial functions
\begin{equation}\label{gen}
\eta_{2M} (\bx)=\pi^{-n/2} L_{M-1}^{(n/2)}(|\bx|^2)\ee^{-|\bx|^2}
\end{equation}
satisfy the moment conditions of order \(2M\) (\cite[p.56]{MSbook}) and give rise to approximation formulas of order \(2M\) modulo the saturation error. If we give an analytic  formula for \(\Phi_{2M}\), then we obtain the following  semi-analytic cubature for \eqref{biharm}
\begin{equation}\label{biharmN}
\cB_n f_{h,\cD}(\bx)=\frac{(h\sqrt{\cD})^{4}}{(\pi\cD)^{n/2}} \sum_{\bm\in\Z^n} f(h\bm)\Phi_{2M}\left(\frac{\bx-h\bm}{h\sqrt{\cD}}\right)
\end{equation}

\begin{thm} \label{prop4.1}
For \(M>1\) we have
\[\Phi_{2M}(\bx)=
\cB_n(\ee^{-|\cdot|^2})(\bx)+ \dfrac{1}{16|\bx|^{n-2}}\gamma(\dfrac{n}{2}-1,|\bx|^2) 
+\dfrac{\ee^{-|\bx|^2}}{16} \sum_{j=0}^{M-3} \dfrac{L_j^{(n/2-1)}(|\bx|^2)}{(j+1)(j+2)}
\]
with the lower incomplete Gamma function
\begin{equation*}\label{gamma}
\gamma(a,x)=\int_0^x t^{a-1} \ee^{-t}dt\,.
\end{equation*}
\end{thm}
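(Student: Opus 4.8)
The plan is to reduce the computation of $\Phi_{2M}=\cB_n\big(L_{M-1}^{(n/2)}(|\cdot|^2)\ee^{-|\cdot|^2}\big)$ to iterated inversion of the Laplacian, using that $\cB_n=\cL_n\circ\cL_n$ (this holds because the convolution of $|\bx|^{2-n}$ with itself is a multiple of $|\bx|^{4-n}$, the constants matching thanks to the identity $\G(n/2)=\tfrac{(n-2)(n-4)}4\,\G(n/2-2)$ already used above). First I would split the density by the classical Laguerre identity $L_{M-1}^{(n/2)}(y)=\sum_{j=0}^{M-1}L_j^{(n/2-1)}(y)$, so that by linearity
\begin{equation*}
\Phi_{2M}(\bx)=\sum_{j=0}^{M-1}\cB_n\big(L_j^{(n/2-1)}(|\cdot|^2)\ee^{-|\cdot|^2}\big)(\bx),
\end{equation*}
the $j=0$ summand being precisely the first term $\cB_n(\ee^{-|\cdot|^2})(\bx)$ of the asserted formula.

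The core of the proof is the ladder identity
\begin{equation*}
\Delta\big(L_k^{(n/2-1)}(|\cdot|^2)\ee^{-|\cdot|^2}\big)(\bx)=-4(k+1)\,L_{k+1}^{(n/2-1)}(|\bx|^2)\,\ee^{-|\bx|^2},\qquad k\ge0 .
\end{equation*}
I would prove it by a direct calculation: setting $y=|\bx|^2$, $\phi=L_k^{(n/2-1)}$, the radial form of the Laplacian turns the left-hand side into $4\big[y\phi''+(n/2-2y)\phi'+(y-n/2)\phi\big]\ee^{-y}$; using the Laguerre equation $y\phi''+(n/2-y)\phi'+k\phi=0$ to remove $y\phi''$ this becomes $4\big[-y\phi'+(y-n/2-k)\phi\big]\ee^{-y}$, and then the differential relation $y\phi'=k\phi-(k+n/2-1)L_{k-1}^{(n/2-1)}$ together with the three-term recurrence for $L_k^{(n/2-1)}$ makes every term except $-(k+1)L_{k+1}^{(n/2-1)}$ cancel. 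This cancellation, which is special to the parameter $n/2-1$, is the step I expect to be the main (though elementary) obstacle, since one has to track the index shifts $k-1,k,k+1$ carefully.

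Granting the ladder identity, for $j\ge1$ one has $L_j^{(n/2-1)}(|\cdot|^2)\ee^{-|\cdot|^2}=\Delta\big(-\tfrac1{4j}L_{j-1}^{(n/2-1)}(|\cdot|^2)\ee^{-|\cdot|^2}\big)$. Since $\cL_n$ of a Schwartz function is $O(|\bx|^{2-n})$ and tends to $0$ at infinity, and the Gaussian particular solution displayed here also tends to $0$, their difference is an entire harmonic function vanishing at infinity, hence $\equiv0$ by Liouville's theorem; therefore $\cL_n\big(L_j^{(n/2-1)}(|\cdot|^2)\ee^{-|\cdot|^2}\big)(\bx)=-\tfrac1{4j}L_{j-1}^{(n/2-1)}(|\bx|^2)\ee^{-|\bx|^2}$ for $j\ge1$. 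Applying $\cL_n$ once more and using $\cB_n=\cL_n\circ\cL_n$, for $j\ge2$ this gives
\begin{equation*}
\cB_n\big(L_j^{(n/2-1)}(|\cdot|^2)\ee^{-|\cdot|^2}\big)(\bx)=\frac1{16\,j(j-1)}\,L_{j-2}^{(n/2-1)}(|\bx|^2)\,\ee^{-|\bx|^2},
\end{equation*}
whereas for $j=1$, applying $\cL_n$ to $-\tfrac14\ee^{-|\cdot|^2}$ and invoking the known harmonic potential of the Gaussian $\cL_n(\ee^{-|\cdot|^2})(\bx)=-\tfrac14|\bx|^{2-n}\gamma(n/2-1,|\bx|^2)$ (\cite{MSbook}; consistent with the case $n=4$ above, since $\gamma(1,|\bx|^2)=1-\ee^{-|\bx|^2}$), one obtains $\cB_n\big(L_1^{(n/2-1)}(|\cdot|^2)\ee^{-|\cdot|^2}\big)(\bx)=\tfrac1{16}|\bx|^{2-n}\gamma(n/2-1,|\bx|^2)$.

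Substituting these three expressions into the sum of the first paragraph, and re-indexing the $j\ge2$ part by $i=j-2$, reproduces exactly
\begin{equation*}
\Phi_{2M}(\bx)=\cB_n(\ee^{-|\cdot|^2})(\bx)+\frac1{16|\bx|^{n-2}}\gamma\Big(\frac n2-1,|\bx|^2\Big)+\frac{\ee^{-|\bx|^2}}{16}\sum_{i=0}^{M-3}\frac{L_i^{(n/2-1)}(|\bx|^2)}{(i+1)(i+2)},
\end{equation*}
the last sum being empty, hence $0$, when $M=2$. The argument is written for $n\ge5$; the cases $n=3$ and $n=4$ go through identically once one replaces the decay bound $O(|\bx|^{2-n})$ by $O(|\bx|^{-1})$, resp.\ $O(|\bx|^{-2})$, in the Liouville step and uses the formulas for $\cL_3(\ee^{-|\cdot|^2})$ and $\cL_4(\ee^{-|\cdot|^2})$ recorded in Section \ref{sec3}.
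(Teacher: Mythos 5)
Your proof is correct and follows essentially the same route as the paper: both arguments rest on the fact that $\cB_n$ inverts $\Delta^2$ (equivalently $\cB_n\Delta=\cL_n$ and $\cB_n\Delta^2=I$ on Schwartz functions), on the formula $\cL_n(\ee^{-|\cdot|^2})(\bx)=-\tfrac14|\bx|^{2-n}\gamma(\tfrac n2-1,|\bx|^2)$, and on the dictionary $L_j^{(n/2-1)}(|\bx|^2)\ee^{-|\bx|^2}=\tfrac{(-1)^j}{j!\,4^j}\Delta^j\ee^{-|\bx|^2}$, which the paper simply cites from \cite{MSbook} (its (3.18) together with (4.24)) and which you recover from the summation formula $L_{M-1}^{(n/2)}=\sum_{j=0}^{M-1}L_j^{(n/2-1)}$ combined with your ladder identity. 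The only substantive difference is that you prove this dictionary from scratch (the ladder computation and the resulting coefficients $\tfrac1{16 j(j-1)}$ all check), whereas the paper imports it.
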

\begin{proof}
We use the relation \cite[(3.18)]{MSbook}
\[
\begin{aligned}
\ L_{M-1}^{(n/2)}(|\bx|^2)\ee^{-|\bx|^2}=\sum_{j=0}^{M-1} \frac{(-1)^j}{j! 4^j}\Delta^j \ee^{-|\bx|^2}
= \ee^{-|\bx|^2} -\frac{1
}{4}\Delta \ee^{-|\bx|^2}+\sum_{j=2}^{M-1} \frac{(-1)^j}{j! 4^j}\Delta^j \ee^{-|\bx|^2}\,.
\end{aligned}
\]
%We consider 
Let \(\cL_n\) be the inverse of the Laplace operator, that is \(\cL_n \Delta=I\). Then \(\cB_n(\Delta \ee^{-|\cdot|^2})=\cL_n( \ee^{-|\cdot|^2}\))
and we have
\[
\begin{aligned}
\cB_n(\ L_{M-1}^{(n/2)}(|\cdot|^2)\ee^{-|\cdot|^2})(\bx)=
\cB_n(\ee^{-|\cdot|^2})(\bx)-\frac {1}{4} \cL_n(\ee^{-|\cdot|^2})(\bx)+\sum_{j=2}^{M-1} \frac{(-1)^j}{j! 4^j}\Delta^{j-2} \ee^{-|\bx|^2} \, .
\end{aligned}
\]
From the relations (\cite[p.75]{MSbook})
\[
\begin{aligned}
 \cL_n(\ee^{-|\cdot|^2})(\bx)&=-\frac{1}{4|\bx|^{n-2}}\gamma(\frac{n}{2}-1,|\bx|^2),\quad n\geq 3 \, ,
\end{aligned}
\]
with the lower incomplete Gamma function \(\gamma(a,\bx)\)
and (\cite[(4.24)]{MSbook})
\[
\sum_{j=2}^{M-1} \frac{(-1)^j}{j! 4^j}\Delta^{j-2} \ee^{-|\bx|^2}=
\frac{1}{16}\sum_{s=0}^{M-3} \frac{(-1)^s}{4^s (s+2)!}\Delta^s \ee^{-|\bx|^2}
=\frac{\ee^{-|\bx|^2}}{16} \sum_{j=0}^{M-3} \frac{L_j^{(n/2-1)}(|\bx|^2)}{(j+1)(j+2)}
\]
we conclude the proof.
\end{proof}
From  the relation (\cite[6.5.16]{AS})
\[
\gamma(\frac{1}{2} , x^2)=\sqrt{\pi} \erf(x)
\] 
and the recurrence relation (\cite[6.5.22]{AS})
\[\gamma(a+1,x^2)=a \gamma(a,x)-\ee^{-x} x^2\]
we see that  in the case of odd space dimension the biharmonic potential of the Gaussian is expressed using the error function \(\erf\). In the case of even space dimension the biharmonic potential of the Gaussian is expressed by elementary functions  since ({\cite[6.5.13]{AS}}) 
\[
\gamma(k,x)=(k-1)! \left(1-\ee^{-x} \sum_{j=0}^{k-1} \frac{x^j}{j!}\right),\qquad k\in \N.
\]

In particular we have

\begin{align*}\label{dim3M}
\Phi_{2M}(\bx)&=
- \frac{\ee^{-|\bx|^2}}{8}-\frac{\sqrt{\pi}|\bx|}{8}{\erf(|\bx|)}
+\frac{\ee^{-|\bx|^2}}{16} \sum_{j=0}^{M-3} \frac{L_j^{(1/2)}(|\bx|^2)}{(j+1)(j+2)}\quad &{\rm for}\quad n=3,
\\
\Phi_{2M}(\bx)&=-\frac{\log|\bx|}{8}-\frac{E_1(|\bx|^2)}{16}+
\frac{\ee^{-|\bx|^2}}{16} \sum_{j=0}^{M-3} \frac{L_j^{(1)}(|\bx|^2)}{(j+1)(j+2)}\quad &{\rm for}\quad n=4,
\\
\Phi_{2M}(\bx)&=
\frac{\sqrt{\pi}}{16} \frac{\erf(|\bx|)}{|\bx|} +
\dfrac{\ee^{-|\bx|^2}}{16} \sum_{j=0}^{M-3} \dfrac{L_j^{(3/2)}(|\bx|^2)}{(j+1)(j+2)}\quad &{\rm for}\quad n=5,
\\
\Phi_{2M}(\bx)&=
\frac{1-\ee^{-|\bx|^2}}{16 |\bx|^2} 
+\dfrac{\ee^{-|\bx|^2}}{16} \sum_{j=0}^{M-3} \dfrac{L_j^{(2)}(|\bx|^2)}{(j+1)(j+2)}
\quad &{\rm for}\quad n=6.
\end{align*}

Theorem \ref{prop4.1} shows that \(\Phi_{2M+2}\) can be obtained from  \(\Phi_{2M}\) by adding some rapidly decaying terms.
We conclude that the approximation of the density \(f\) by the quasi-interpolant \eqref{fhD}  with the basis functions \eqref{gen} leads to the semi-analytic approximation of the biharmonic potential \eqref{biharmN}
and the corresponding analytic expression for \(\Phi_{2M}\) has to be used.

\section{Separated representation of the biharmonic potential acting on Gaussians}\label{sec5}
\setcounter{equation}{0}

In this section we take the tensor product generation function 
\begin{equation}
\begin{split}\label{basis}
&\h_{2M}(\bx)=\prod_{j=1}^n\widetilde{\h}_{2M}(x_j);
\quad
\widetilde{\h}_{2M}(x_j)=\frac{(-1)^{M-1}}{2^{2M-1}\sqrt{\pi} (M-1)!}\frac{H_{2M-1}(x_j) \ee^{-x_j^2}}{x_j} \, ,
\end{split}
\end{equation}
which satisfies the moment conditions of order \(2M\), where $H_k$ are the Hermite polynomials
\begin{equation*}\label{hermite}
H_k(x)=(-1)^k \ee^{x^2} \left( \frac{d}{dx}\right)^k \ee^{-x^2}\,.
\end{equation*}
The \(n\)-dimensional potential \(\cB_n\) applied to the basis function \(\eta_{2M}\) can be transformed to a one-dimensional integral with separable integrand, i.e., a product of functions depending only on one of the variables. In Section \ref{sec6}  we will show how these one-dimensional integrals, in combination with a quadrature rule, lead to accurate separated representations of the potential acting on the generating function. Hence, for functions \(f\) with separated representations, we derive fast formulas which reduces the \(n\)-dimensional convolution \eqref{biharm} by one-dimensional discrete convolutions. 

We start with second order approximations, i.e. \(M=1\).
\begin{thm} \label{thm5.1}
The biharmonic potential \(\cB_n(\ee^{-|\cdot|^2})\) admits the following one-dimensional integral
representation
\begin{equation}\label{rapr3}
\cB_3( \ee^{-|\cdot|^2})(\bx)= -\frac{1}{8}\itg_0^\infty \ee^{-|\bx|^2/(1+t)} \left(
\frac{1}{(1+t)^{3/2}}+\frac{t |\bx|^2}{(1+t)^{5/2}}
\right) dt\,,
\end{equation}
\begin{equation}\label{rapr2}
\cB_n( \ee^{-|\cdot|^2})(\bx)= \frac{1}{16}\itg_0^\infty \frac{\ee^{-|\bx|^2/(1+t)}}{(1+t)^{n/2}}\,t\,dt,\quad n\geq 5\,.
\end{equation}
\end{thm}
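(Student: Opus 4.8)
The plan is to reduce both identities to closed forms already available in the paper. For $n\ge 5$ I would start from \eqref{rapr}, namely $\cB_n(\ee^{-|\cdot|^2})(\bx)=\frac{1}{4(n-2)(n-4)}\,{_1F_1}(\frac{n-4}{2},\frac n2,-|\bx|^2)$, and insert the Euler-type integral representation of the confluent hypergeometric function
\[
{_1F_1}(a,c,z)=\frac{\Gamma(c)}{\Gamma(a)\,\Gamma(c-a)}\itg_0^1 \ee^{zu}\,u^{a-1}(1-u)^{c-a-1}\,du ,\qquad \re c>\re a>0 ,
\]
taken with $a=\frac{n-4}{2}$, $c=\frac n2$, $z=-|\bx|^2$. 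Since $c-a=2$, the hypothesis $\re c>\re a>0$ is equivalent to $n\ge5$, exactly the standing assumption. The relation $\Gamma(n/2)=\frac{(n-2)(n-4)}{4}\Gamma(\frac{n-4}{2})$ quoted above collapses the constant to $1/16$, leaving $\cB_n(\ee^{-|\cdot|^2})(\bx)=\frac{1}{16}\itg_0^1 u^{(n-6)/2}(1-u)\,\ee^{-|\bx|^2 u}\,du$. The substitution $u=1/(1+t)$, under which $1-u=t/(1+t)$, $du=-(1+t)^{-2}dt$ and $(n-6)/2+3=n/2$, is a smooth bijection of $(0,\infty)$ onto $(0,1)$ and turns this into \eqref{rapr2}. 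One should note that the resulting integral converges at $t=\infty$ precisely because $n\ge5$, the integrand decaying like $t^{1-n/2}$.

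For $n=3$ this route is blocked: the parameter $a=\frac{n-4}{2}=-\tfrac12$ is negative (the Riesz exponent $4-n=1$ is positive), so the Euler integral is not directly available and the naive analogue of \eqref{rapr2} would diverge at infinity. Instead I would verify \eqref{rapr3} directly against the explicit formula \eqref{dim3M1}, which is already established. The same substitution $u=1/(1+t)$ applied to the right-hand side of \eqref{rapr3} produces $-\frac18\itg_0^1 u^{-1/2}\big(1+(1-u)|\bx|^2\big)\ee^{-|\bx|^2 u}\,du$; splitting off the two elementary integrals $\itg_0^1 u^{\pm1/2}\ee^{-|\bx|^2 u}\,du$ and evaluating each by the further substitution $u=v^2$ (which gives $\itg_0^1\ee^{-|\bx|^2 v^2}dv=\frac{\sqrt\pi}{2|\bx|}\erf(|\bx|)$, and, after one integration by parts, the corresponding expression for the $v^2$-weighted integral) yields exactly $-\frac{\ee^{-|\bx|^2}}{8}-\frac{\sqrt\pi}{16}\frac{\erf(|\bx|)}{|\bx|}(2|\bx|^2+1)$, i.e. \eqref{dim3M1}. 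Hence \eqref{rapr3} holds.

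A more self-contained derivation of \eqref{rapr2} that I would keep in reserve proceeds by Gaussian subordination: write $|\bx-\by|^{4-n}=\Gamma(\frac{n-4}{2})^{-1}\itg_0^\infty s^{(n-4)/2-1}\ee^{-s|\bx-\by|^2}ds$ (legitimate for $n\ge5$), substitute into \eqref{biharm}, interchange the integrations by Tonelli as all integrands are positive, evaluate the inner Gaussian integral $\itg_{\R^n}\ee^{-s|\bx-\by|^2-|\by|^2}d\by=(\pi/(1+s))^{n/2}\ee^{-s|\bx|^2/(1+s)}$ by completing the square, simplify the constant as before, and put $s=1/t$; equivalently one may use $\cB_n=\Delta^{-2}=\itg_0^\infty t\,\ee^{t\Delta}\,dt$ together with $\ee^{t\Delta}\ee^{-|\cdot|^2}(\bx)=(1+4t)^{-n/2}\ee^{-|\bx|^2/(1+4t)}$ and set $s=4t$. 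In every variant the serious obstacle is the dimension $n=3$: the Riesz kernel there has positive exponent, so each ``subordinate and integrate'' argument formally produces divergent one-dimensional integrals, repairable only by subtracting the appropriate constant (the regularized identity $|z|=\frac{1}{2\sqrt\pi}\itg_0^\infty s^{-3/2}(1-\ee^{-s|z|^2})ds$) and tracking the cancellations through an integration by parts. Passing through the already-proved closed form \eqref{dim3M1} avoids this altogether, and the remaining bookkeeping — two changes of variable and a couple of elementary $\erf$-integrals — is routine.
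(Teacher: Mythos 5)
Your proposal is correct. For $n\ge 5$ it coincides with the paper's proof verbatim: insert the Euler integral \eqref{int1F1} into \eqref{rapr} with $a=\frac{n-4}{2}$, $c=\frac n2$ (so $c-a=2$ and $\Gamma(2)=1$), collapse the constant via $\Gamma(\frac n2)=\frac{(n-2)(n-4)}{4}\Gamma(\frac{n-4}{2})$, and substitute $\tau=1/(1+t)$; your remark that convergence of \eqref{rapr2} at infinity and applicability of the Euler integral both hinge on $n\ge5$ is exactly the right observation. Where you diverge from the paper is the case $n=3$. The paper stays inside the hypergeometric framework: it uses the contiguous relation \cite[13.4.4]{AS} to write ${_1F_1}(-\frac12,\frac32,-|\bx|^2)$ as ${_1F_1}(\frac12,\frac32,-|\bx|^2)+\frac23|\bx|^2\,{_1F_1}(\frac12,\frac52,-|\bx|^2)$, applies \eqref{int1F1} to each term (now legitimately, since $a=\frac12>0$), and reads off \eqref{rapr3} from \eqref{rapr}; this is a forward derivation and produces the specific two-term structure of \eqref{rapr3} rather than having to guess it. You instead verify \eqref{rapr3} backwards against the closed form \eqref{dim3M1}: the substitution $u=1/(1+t)$ and the two elementary $\erf$-integrals you compute do reproduce $-\frac{\ee^{-|\bx|^2}}{8}-\frac{\sqrt\pi}{16}\frac{\erf(|\bx|)}{|\bx|}(2|\bx|^2+1)$, so the argument is complete, at the price of relying on the independently established \eqref{dim3M1} and of presupposing the answer. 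Both routes are short and rigorous; the paper's generalizes more readily (the same recurrence trick handles other negative parameters), while yours is more elementary and doubles as an independent consistency check between \eqref{rapr3} and \eqref{dim3M1}. Your reserve derivation by Gaussian subordination is also sound for $n\ge5$ and is essentially the heat-kernel reading of \eqref{rapr2}, but, as you correctly note, it does not extend painlessly to $n=3$.
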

\begin{proof}
We use the integral formula \cite[13.2.1]{AS}
\begin{align}\label{int1F1}
_1F_1(a,c,z) = \frac{\Gamma(c)}{\Gamma(a)\Gamma(c-a)} \itg_0^1 \ee^{\,z\tau} 
\tau^{a-1}(1-\tau)^{-a+c-1} d\tau, \quad \re(c)>\re(a)>0 \, .
\end{align}
Let \(n\geq 5\). With the substitution $\tau=1/(1+t)$ we get
\begin{align*}
 {_1F_1}(\frac {n-4}{2},\frac n2,-|\bx|^2)
&= \frac{\Gamma(\frac n2)}{\Gamma(\frac {n}{2}-2)}
\itg_0^\infty\ee^{-|\bx|^2/(1+t)} \frac{1}{(1+t)^{n/2-3}}\frac{t}{1+t}
\frac{dt}{(1+t)^2}\\
&=\frac{(n-2)(n-4)}{4}\itg_0^\infty  \frac{\ee^{-|\bx|^2/(1+t)}}{(1+t)^{n/2}}\, t \, dt \, .
\end{align*}
From \eqref{rapr} we get \eqref{rapr2}.
\\
From the recurrence relation (cf. \cite[13.4.4]{AS}) we have
\[
{_1 F_1(-\frac 12,\frac 32,-|\bx|^2)}={_1F_1}(\frac 12,\frac 32,-|\bx|^2)+\frac 23 |\bx|^2 {_1F_1}(\frac 12,\frac 52,-|\bx|^2).
\]
The integral formula \eqref{int1F1} gives
\[
{_1F_1}(\frac 12,\frac 32,-|\bx|^2)=\frac 12 \int_0^1 \frac{\ee^{- |\bx|^2 \tau}}{\tau^{1/2}}d\tau=
\frac 12 \int_0^\infty \frac{\ee^{-|\bx|^2/(1+t)}}{(1+t)^{3/2}}dt\,,
\]
\[
{_1F_1}(\frac 12, \frac 52, -|\bx|^2)=\frac{3}{4} \int_0^\infty \frac{ \ee^{-|\bx|^2/(1+t)}}{(1+t)^{5/2}}\, t \, dt \, .
\]
\eqref{rapr3} follows from \eqref{rapr}.
\end{proof}
In the next theorem we derive one-dimensional integral representations for the potential \(\cB_n\) acting on the basis functions \eqref{basis}.
\begin{thm} 
For \(M>1\) we have
\begin{multline}\label{dimM3bis} 
\cB_3(\eta_{2M})(\bx)=
-\frac{1}{8 \pi^{3/2}}\left(  \itg_0^\infty  \prod_{j=1}^3 \frac{\ee^{-x_j^2/(1+t)}}{\sqrt{1+t}} Q_M(x_j,t) dt
\right.\\
\left.
+\itg_0^\infty  \sum_{i=1}^3 \frac{\ee^{-x_i^2/(1+t)}}{\sqrt{1+t}}R_M(x_i,t) \prod_{\substack{j=1\\j\neq i}}^3 \frac{\ee^{-x_j^2/(1+t)}}{\sqrt{1+t}}  Q_M(x_j,t)  t\,dt\right)
\end{multline}
\begin{equation}\label{dimnMbis}
\cB_n(\eta_{2M})(\bx)=
\frac{1}{16 \pi^{n/2}}  \itg_0^\infty  \prod_{j=1}^n \frac{\ee^{-x_j^2/(1+t)}}{\sqrt{1+t}} Q_M(x_j,t) \, t\,dt\,,\qquad n\geq 5
\end{equation}
with \(\eta_{2M}\) in \eqref{basis} and
\[
Q_M(x,t)=  \sum_{k=0}^{M-1}  \frac{(-1)^k}{k!4^k}
\frac{1}{(1+t)^{k}}
H_{2k}\left(\frac{x}{\sqrt{1+t}}\right) \,;
\]
\begin{equation}\label{RM}
R_M(x,t)= \sum_{k=0}^{M-1} \frac{(-1)^k}{k!4^k}
\frac{1}{(1+t)^k} \cS_{2k}\left(\frac{x}{\sqrt{1+t}}\right)\,;
\end{equation}
\begin{equation}\label{Sk}
\cS_k(y)=y^2H_{k}(y) -2kyH_{k-1}(y)+k(k-1)H_{k-2}(y)\,.
\end{equation}
\(Q_M(x,t)\) and \(R_M(x,t)\) are polynomials in \(x\) whose coefficients depend on \(t\).
\end{thm}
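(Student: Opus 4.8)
The plan is to reduce the two representation formulas to Theorem~\ref{thm5.1} by exploiting the tensor–product form of $\h_{2M}$ together with the fact that the biharmonic potential is a convolution, hence commutes with any constant–coefficient differential operator applied to a Schwartz function.

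First I would rewrite the generating function as a differential operator applied to a pure Gaussian. By the classical relation $L_{M-1}^{(1/2)}(x^2)=\dfrac{(-1)^{M-1}}{2^{2M-1}(M-1)!}\dfrac{H_{2M-1}(x)}{x}$ and the one–dimensional instance of \cite[(3.18)]{MSbook} already used in Section~\ref{sec4}, one has
\[
\widetilde\h_{2M}(x)=\frac1{\sqrt\pi}L_{M-1}^{(1/2)}(x^2)\ee^{-x^2}
=\frac1{\sqrt\pi}\sum_{k=0}^{M-1}\frac{(-1)^k}{k!\,4^k}\Big(\frac{d}{dx}\Big)^{2k}\ee^{-x^2}.
\]
Setting $P_M:=\prod_{j=1}^n\big(\sum_{k=0}^{M-1}\frac{(-1)^k}{k!4^k}\partial_{x_j}^{2k}\big)$ this gives $\h_{2M}(\bx)=\pi^{-n/2}P_M\ee^{-|\bx|^2}$, hence $\cB_n(\h_{2M})=\pi^{-n/2}P_M\cB_n(\ee^{-|\cdot|^2})$. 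Then I would insert the one–dimensional integral representations \eqref{rapr2} (for $n\ge5$) and \eqref{rapr3} (for $n=3$). Their integrands are finite sums of products of functions of single variables $x_j$ — for $n=3$ the $|\bx|^2$–factor splits as $\sum_i x_i^2$ — so $P_M$ may be carried under the integral sign and applied factor by factor; differentiation under the integral is legitimate because the integrands and all their $\bx$–derivatives are $O(t)$ as $t\to0$ and decay like $t^{\,1-n/2}$ (respectively $t^{-3/2}$ for $n=3$) as $t\to\infty$.

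The computational core is the evaluation of the resulting one–dimensional derivatives. Writing $u=x/\sqrt{1+t}$ one has $\partial_x^{2k}=(1+t)^{-k}\partial_u^{2k}$ and $\partial_u^{2k}\ee^{-u^2}=H_{2k}(u)\ee^{-u^2}$, so
\[
\Big(\sum_{k=0}^{M-1}\frac{(-1)^k}{k!4^k}\partial_x^{2k}\Big)\ee^{-x^2/(1+t)}=Q_M(x,t)\,\ee^{-x^2/(1+t)}.
\]
For the factor carrying an extra $x^2$ I would apply Leibniz' rule to $\partial_u^{2k}(u^2\ee^{-u^2})$, which has only three nonzero terms since $(u^2)''=2$; combined with $\big(\ee^{-u^2}\big)^{(m)}=(-1)^mH_m(u)\ee^{-u^2}$ this equals $\cS_{2k}(u)\ee^{-u^2}$ with $\cS$ as in \eqref{Sk}, whence
\[
\Big(\sum_{k=0}^{M-1}\frac{(-1)^k}{k!4^k}\partial_x^{2k}\Big)\big(x^2\ee^{-x^2/(1+t)}\big)=(1+t)\,R_M(x,t)\,\ee^{-x^2/(1+t)}
\]
with $R_M$ as in \eqref{RM}. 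Substituting these into the differentiated integrands — and using that the extra factor $(1+t)$ produced by $\cS_{2k}$ absorbs one power in the $n=3$ weight $t(1+t)^{-5/2}$, leaving $t(1+t)^{-3/2}=t\prod_{j=1}^3(1+t)^{-1/2}$ — one lands exactly on \eqref{dimnMbis} (after the prefactor $\pi^{-n/2}/16$) and on \eqref{dimM3bis} (after the prefactor $-\pi^{-3/2}/8$). Finally, $Q_M$ and $R_M$ are polynomials in $x$ with $t$–dependent coefficients because $H_{2k}$ and $\cS_{2k}$ contain only even powers of their argument, so each $(x/\sqrt{1+t})^{2l}$ equals $x^{2l}(1+t)^{-l}$ and the half–integer powers of $1+t$ disappear.

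The one genuinely delicate point — the main obstacle — is the Leibniz evaluation of $\partial_u^{2k}(u^2\ee^{-u^2})$ and the identification of its outcome with the polynomial $\cS_{2k}$ of \eqref{Sk}, together with the bookkeeping of the powers of $(1+t)$ needed to recast the three–dimensional "$|\bx|^2$–potential" as a sum of products of univariate factors; once $\cB_n$ has been commuted past $P_M$ and Theorem~\ref{thm5.1} is invoked, everything else is routine.
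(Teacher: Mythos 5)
Your proposal is correct and follows essentially the same route as the paper: both write $\widetilde\h_{2M}$ as $\pi^{-1/2}\sum_{k=0}^{M-1}\frac{(-1)^k}{k!4^k}\frac{d^{2k}}{dx^{2k}}\ee^{-x^2}$, commute the resulting constant-coefficient operator with $\cB_n$, insert the integral representations of Theorem~\ref{thm5.1}, differentiate under the integral, and evaluate $\partial_x^{2k}\ee^{-x^2/(1+t)}$ and $\partial_x^{2k}\bigl(x^2\ee^{-x^2/(1+t)}\bigr)$ via Hermite polynomials and Leibniz' rule to obtain $Q_M$ and $R_M$. The only differences are cosmetic (your explicit justification of differentiation under the integral and the bookkeeping of the factor $1+t$ in the $n=3$ case, which the paper handles by writing $x^2/(1+t)=y^2$ from the outset).
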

\begin{proof}
To get a one-dimensional integral representation  of \(\cB_n(\prod_{j=1}^n \widetilde{\eta}_{2M})\)  we use the 
relation (\cite[p.55]{MSbook})
\[
\widetilde{ \h}_{2M}(y)=\frac{1}{\sqrt{\pi}} \sum_{k=0}^{M-1} \frac{(-1)^k}{k!4^k}
\frac{d^{2k}}{dy^{2k}}\ee^{-{y^2}} \, .
\]
Let \(n\geq 5\). The solution of the equation
\[
\Delta\Delta u=\prod_{j=1}^n \widetilde{\eta}_{2M}(x_j)
\]
is given by the integral
\[
\begin{aligned}
\frac{1}{16} \prod_{j=1}^n \frac{1}{\sqrt{\pi}} &\sum_{k=0}^{M-1} \frac{(-1)^k}{k!4^k}
\frac{d^{2k}}{d x_j^{2k}}  \itg_0^\infty \frac{\ee^{-x_j^2/(1+t)}}{(1+t)^{n/2}}\,t\,dt\\
&=
\frac{1}{16}  \itg_0^\infty \left( \prod_{j=1}^n \frac{1}{\sqrt{\pi}} \sum_{k=0}^{M-1} \frac{(-1)^k}{k!4^k}
\frac{d^{2k}}{d x_j^{2k}}  \ee^{-x_j^2/(1+t)}\right)\frac{t\,dt}{(1+t)^{n/2}}\\
&=\frac{1}{16}  \itg_0^\infty \left( \prod_{j=1}^n  \frac{1}{\sqrt{\pi}} \sum_{k=0}^{M-1}  \frac{(-1)^k}{k!4^k}
\frac{\ee^{-x_j^2/(1+t)}}{(1+t)^{k+1/2}}
H_{2k}\left(\frac{x_j}{\sqrt{1+t}}\right) \right) {t\,dt} \, ,
\end{aligned}
\]
that is \eqref{dimnMbis}.\\
Let \(n=3\). Keeping in mind \eqref{rapr3}, we get
\begin{multline}\notag
\cB_3(\eta_{2M} (\cdot))(\bx)= -\frac{1}{8\pi^{3/2}} \prod_{j=1}^3 \sum_{k=0}^{M-1} \frac{(-1)^k}{k!4^k}
\frac{d^{2k}}{d x_j^{2k}}  \itg_0^\infty 
\frac{ \ee^{-|\bx|^2/(1+t)} }{(1+t)^{3/2}}dt\\
-\frac{1}{8\pi^{3/2}} \prod_{j=1}^3 \sum_{k=0}^{M-1} \frac{(-1)^k}{k!4^k}
\frac{d^{2k}}{d x_j^{2k}}  \itg_0^\infty\frac{ |\bx|^2  \ee^{-|\bx|^2/(1+t)} }{(1+t)^{5/2}}tdt.
\end{multline}
The first term in the r.h.s is similar to that considered in the case \(n\geq 5\). 
\\ Concerning the second term, we have
\begin{eqnarray*}
\frac{d^{2k}}{d x^{2k}} \left(\frac{x^2}{1+t}  \ee^{-x^2/(1+t)} \right)=\frac{1}{(1+t)^k} 
\frac{d^{2k}}{d y^{2k}} \left(y^2  \ee^{-y^2} \right)_{y=x/\sqrt{1+t}}=
\frac{\ee^{-x^2/(1+t)}}{(1+t)^k} \cS_{2k}\left(\frac{x}{\sqrt{1+t}}\right)
\end{eqnarray*}
with \(\cS_k(y)\) in \eqref{Sk}.
Then
\[\sum_{k=0}^{M-1}   \frac{(-1)^k}{k!4^k}
\frac{d^{2k}}{d x^{2k}} \left(\frac{x^2}{1+t}  \ee^{-x^2/(1+t)}\right)=R_M(x,t)  \ee^{-x^2/(1+t)}
\]
with \(R_M(x,t)\) defined in \eqref{RM}. It follows that the second integral can be written as
\begin{align*}
\prod_{j=1}^3 &\sum_{k=0}^{M-1} \frac{(-1)^k}{k!4^k}
\frac{d^{2k}}{d x_j^{2k}}  \itg_0^\infty\frac{|\bx|^2  \ee^{-|\bx|^2/(1+t)} }{(1+t)^{5/2}}tdt\\
&=
 \itg_0^\infty \prod_{j=1}^3 \sum_{k=0}^{M-1}   \frac{(-1)^k}{k!4^k}
\frac{d^{2k}}{d x_j^{2k}} |\bx|^2  \ee^{-|\bx|^2/(1+t)} \frac{ tdt}{(1+t)^{5/2}}\\
&=\itg_0^\infty
\sum_{i=1}^3R_M(x_i,t) \prod_{\substack{j=1\\j\neq i}}^3 Q_M(x_j,t)
 \frac{\ee^{-|\bx|^2/(1+t)} tdt}{(1+t)^{3/2}} \, ,
\end{align*}
which leads to \eqref{dimM3bis}.
\end{proof}

The polynomials $Q_M(x,t)$ and \(R_M(x,t)\) 
for $M=1,2,3,4$ are given by
\begin{align*}
&Q_1(x,t)=1 \, , \qquad
Q_2(x,t)=-\frac{x^2}{(1+t)^2}+\frac{1}{2 (1+t)}+1 ,\\
&Q_3(x,t)=Q_2(x,t)+\frac{x^4}{2 (1+t)^4}-\frac{3 x^2}{2 (1+t)^3}+\frac{3}{8 (1+t)^2}\, ,\\
&Q_4(x,t)=Q_3(x,t)-\frac{x^6}{6 (1+t)^6}+\frac{5 x^4}{4 (1+t)^5}-\frac{15 x^2}{8
   (1+t)^4}+\frac{5}{16 (1+t)^3}\, ,\\
% \end{align*}
% \begin{align*}
&R_1(x,t)=\frac{x^2}{1+t} \, , \qquad
R_2(x,t)= -\frac{x^4}{(1+t)^3}+\frac{x^2}{1+t}+\frac{5 x^2}{2 (1+t)^2}-\frac{1}{2 (1+t)},\\
&R_3(x,t)=R_2(x,t)+\frac{x^6}{2 (1+t)^5}-\frac{7 x^4}{2 (1+t)^4}+\frac{39 x^2}{8
   (1+t)^3}-\frac{3}{4 (1+t)^2},\\
   &R_4(x,t)=R_3(x,t) -\frac{x^8}{6 (1+t)^7}+\frac{9 x^6}{4 (1+t)^6}-\frac{65 x^4}{8
   (1+t)^5}+\frac{125 x^2}{16 (1+t)^4}-\frac{15}{16 (1+t)^3} .
\end{align*}

\section{Implementation and numerical results} \label{sec6}
\setcounter{equation}{0}

In this section we consider the fast computation of the biharmonic potential based on \eqref{basis}.
From  \eqref{biharmN} and \eqref{dimnMbis} we derive the cubature formula
\begin{equation*}\label{fastcub}
\cB_{M,h}^{(n)} f (\bx)=\frac{(h\sqrt{\cD})^{4}}{16 (\pi \cD)^{n/2}} \sum_{\bm\in\Z^n} f(h\bm)
  \itg_0^\infty  \prod_{j=1}^n Q_M\left(\frac{x_j-hm_j}{h\sqrt{\cD}},t\right) \, t\,dt\,,\quad n\geq 5\,.
\end{equation*}
At the grid points \(h\bk=(hk_1,...,hk_n)\) we obtain
\begin{equation}\label{convsum}
\cB_{M,h}^{(n)} f(h\bk)=\frac{(h\sqrt{\cD})^{4}}{16} \sum_{\bm\in\Z^n} f(h\bm)
 a_{\bk-\bm}^{(M)}
\end{equation}
where
\begin{equation}\label{akM}
a_{\bk}^{(M)}= \frac{1}{ (\pi \cD)^{n/2}}\itg_0^\infty  \prod_{j=1}^n \ee^{-k_j^2/(\cD(1+t))} Q_M\left(\frac{k_j}{\sqrt{\cD}},t\right) \, t\,dt\,.
 \end{equation}
 The product structure of the integrand leads to new cubature formulas if the density \(f\) admits the so-called separated representation. The idea is the following.  If  \(f\) is given as product of univariate functions
 \[
 f(\bx)=\prod_{j=1}^n f_j(x_j)
 \]
 then  the values on the grid \(h\bk\) of the cubature formula can be written as
 \[
\cB_{M,h}^{(n)} f(h\bk)=\frac{(h\sqrt{\cD})^{4}}{16 (\pi \cD)^{n/2}}  \itg_0^\infty  \prod_{j=1}^n \sum_{m_j\in\Z} f_j(hm_j) \ee^{-(k_j-m_j)^2/(\cD(1+t))} Q_M\left(\frac{k_j-m_j}{\sqrt{\cD}},t\right) t\,dt\,.
 \]
 A  suitable quadrature of this integral with nodes \(\tau_s\) and quadrature weights \(\omega_s\) leads to 
 \[
\cB_{M,h}^{(n)} f(h\bk)\approx
 \frac{(h\sqrt{\cD})^{4}}{16(\pi \cD)^{n/2}}  \sum_s \omega_s \prod_{j=1}^n \sigma_j(k_j,\tau_s)
 \]
 with 
 \[
\sigma_j(k,\tau)=\sum_{m\in\Z} f_j(hm) \ee^{-(k-m)^2/(\cD(1+\tau))} Q_M\left(\frac{k-m}{\sqrt{\cD}},\tau\right) \tau.
 \]
Then the value of the integral operator on the grid \(h\bk\) can be obtained by computing one-dimensional sums, and therefore the computational complexity of the algorithm scales linearly in the physical dimension.\\

We use an efficient quadrature based on the classical trapezoidal rule, which is exponentially converging for rapidly decaying smooth functions on the real line. We make the substitutions
 \[
 t=\ee^\xi,\quad \xi=a (\sigma+\ee^\sigma),\quad \sigma=b (u-\ee^{-u})
 \]
 with positive constants \(a\) and \(b\) proposed in \cite{Mo} (see also \cite{LMS2, LMS3}). Then the integrals \eqref{akM} are transformed to integrals over \(\R\) with integrands decaying  doubly exponentially in \(u\). After the substitution we have
 \begin{align*} 
a_{\bk}^{(M)}= 
\frac{1}{ (\pi \cD)^{n/2}}\itg_{-\infty}^\infty  \prod_{j=1}^n \ee^{-k_j^2/(\cD(1+\Phi(u)))} Q_M\left(\frac{k_j}{\sqrt{\cD}},\Phi(u)\right) \, \Phi(u)\Phi'(u)du
\end{align*}
with the functions
  \begin{align*} 
  \Phi(u)={\rm exp}(ab\,(u-\ee^{-u})+a \,{\rm exp}(b(u-\ee^{-u})))  ) \, ,\\
  \Phi'(u)=\Phi(u) ab(1+\ee^{-u})(1+{\rm exp}(b(u-\ee^{-u})))\,.
 \end{align*}
Thus the trapezoidal rule of step \(\tau\) can provide very accurate approximations of the integral for a relatively small number of nodes \(\tau s\)
 \begin{align*} 
a_{\bk}^{(M)}\approx 
\frac{\tau}{ (\pi \cD)^{n/2}}\sum_s   \prod_{j=1}^n \ee^{-k_j^2/(\cD(1+\Phi(\tau s)))} Q_M\left(\frac{k_j}{\sqrt{\cD}},\Phi(\tau s)\right) \, \Phi(\tau s)\Phi'(\tau s)
\end{align*}
Assume  that \(f\), within a prescribed accuracy, can be represented as sum of products of one-dimensional functions
 \begin{equation}\label{sepf}
 f(\bx)=\sum_{p=1}^P \beta_p \prod_{j=1}^n f_j^{(p)}(x_j)+\cO(\epsilon)
 \end{equation}
 with suitable functions \(f_j^{(p)}\) chosen such that the separation rank \(P\) is small. We derive the approximation of the convolutional sum \eqref{convsum} using one-dimensional operations
 \begin{align} \notag
\cB_{M,h}^{(n)} f(h\bk)& \approx  \frac{(h\sqrt{\cD})^{4}}{16(\pi \cD)^{n/2}} \tau 
 \sum_{p=1}^P \beta_p \sum_s \Phi(\tau s)\Phi'(\tau s)\\\notag
& \times  \prod_{j=1}^n \sum_{m_j}f_j^{(p)}(hm_j) 
  \left(
   \ee^{-(k_j-m_j)^2/(\cD(1+\Phi(\tau s)))} Q_M\left(\frac{k_j-m_j}{\sqrt{\cD}},\Phi(\tau s)\right)
  \right)\,.
 \end{align} 
We provide  results of some experiments which show the accuracy and numerical order of the method. We compute the biharmonic potential of the density 
\begin{equation}\label{f(x)}
f(\bx)=4 \ee^{-|\bx|^2}(n (n+2)-4 (n+2) |\bx|^2 +4 |\bx|^4) \, ,
\end{equation}
which has exact values
\(
\cB_{n}f(\bx)=\ee^{-|\bx|^2}
\). In Table \ref{T1} we compare the exact values of \(\cB_n f\) and the approximate values \(\cB_{4,0.025}^{(n)} f\) at some grid points \((x_1,0,...,0)\in \R^n\) for space dimensions \(n=5,10,10^2,...,10^8\). 

\begin{table}[h]
\begin{footnotesize}
%\begin{scriptsize}
\begin{tabular}{r|c|cc|cc|cc} \hline
$n$ &&\multicolumn{2}{c|}{5}&\multicolumn{2}{c|}{10}&\multicolumn{2}{c}{100}\\ \hline
 $x_1$ & exact & abs. error &rel. error & abs. error & rel. error & abs. error & rel. error\\ \hline
 0 &0.100E+01 & 0.129E-09  &  0.129E-09 & 0.258E-09  &  0.258E-09 & 0.258E-08  &  0.258E-08 \\
 1 &0.368E+00 & 0.286E-10  &  0.777E-10& 0.760E-10  &  0.207E-09 & 0.930E-09  &  0.253E-08 \\
 2 &0.183E-01 & 0.171E-11  &  0.933E-10 & 0.404E-11  &  0.220E-09 & 0.465E-10  &  0.254E-08 \\
 3 & 0.123E-03& 0.112E-12  &  0.910E-09 & 0.943E-13  &  0.764E-09 & 0.381E-12  &  0.309E-08 \\
 4 & 0.113E-06 &0.435E-13  &  0.386E-06 & 0.948E-14  &  0.843E-07 & 0.973E-14  &  0.864E-07 \\
\hline 
\end{tabular}\\[1mm]
\begin{tabular}{r|c|cc|cc|cc} \hline
 $n$ &&\multicolumn{2}{c|}{$1\,000$ }&\multicolumn{2}{c|}{$10\,000$}&\multicolumn{2}{c}{$100\,000$ }\\ \hline
 $x_1$ & exact & abs. error &rel. error & abs. error & rel. error & abs. error & rel. error \\ \hline
 0 & 0.100E+01  & 0.258E-07  &  0.258E-07&  0.258E-06  &  0.258E-06 & 0.258E-05  &  0.258E-05   \\
 1 & 0.368E+00  &0.947E-08  &  0.257E-07&  0.948E-07  &  0.258E-06 & 0.949E-06  &  0.258E-05   \\
 2 & 0.183E-01  & 0.472E-09  &  0.257E-07&  0.472E-08  &  0.258E-06 & 0.472E-07  &  0.258E-05  \\
  3 & 0.123E-03 &  0.324E-11  &  0.263E-07&  0.319E-10  &  0.258E-06 & 0.318E-09  &  0.258E-05  \\
 4 & 0.113E-06  & 0.123E-13  &  0.110E-06&  0.385E-13  &  0.342E-06 &  0.300E-12  &  0.266E-05  \\
\hline 
\end{tabular}\\[1mm]
\begin{tabular}{r|c|cc|cc|cc} \hline
  $n$ &&\multicolumn{2}{c|}{$1\,000\,000$ }&\multicolumn{2}{c|}{$10\,000\,000$}&\multicolumn{2}{c}{$100\,000\,000$ }\\ \hline
 $x_1$ & exact & abs. error &rel. error & abs. error & rel. error & abs. error & rel. error\\ \hline
 0 & 0.100E+01  &  0.258E-04  &  0.258E-04&0.258E-03  &  0.258E-03 & 0.258E-02  &  0.258E-02  \\
 1 &0.368E+00  &  0.949E-05  &  0.258E-04&0.948E-04  &  0.258E-03 & 0.947E-03  &  0.258E-02  \\
 2 & 0.183E-01  &  0.472E-06  &  0.258E-04&0.472E-05  &  0.258E-03& 0.472E-04  &  0.258E-02 \\
  3 &0.123E-03  &  0.318E-08  &  0.258E-04&0.318E-07  &  0.258E-03& 0.318E-06  &  0.258E-02  \\
 4 &0.113E-06  &  0.291E-11  &  0.259E-04&0.290E-10  &  0.258E-03&0.290E-09  &  0.258E-02 \\
\hline 
\end{tabular}\\[1mm]
\caption{\small Exact  value of ${\cB}_{n} f(x_1,0,\ldots,0)$, absolute error 
and relative error using $\cB_{4,0.025}^{(n)}$}\label{T1}  
\end{footnotesize}
%\end{scriptsize}
\end{table} 
In Table \ref{T2} we report on the absolute errors and approximation rates for the biharmonic potential \(\cB_n f(1,0,...,0)\) in the space dimensions \(n=5\times 10^k\), \(k=0,...,4\). The approximate values are computed by the cubature formulas \(\cB_{M,h}^{(n)}\) for \(M=1,2,3,4\). We use uniform grids of size \(h=0.1\times 2^{-k}\), \(k=1,...,5\).
 For high dimensional cases the second order formula fails whereas the eighth order formula \(\cB_{4,h}^{(n)}\) approximates with the predicted approximation rates. Table \ref{T3} shows that the cubature method is effective also for much higher space dimensions and the approximation rate is reached.
For all calculations the same quadrature rule is used for computing the one-dimensional integral, the parameters are \(\cD=5\), \(a=6\) and \(b=5\), \(\tau=0.003\) and \(300\) summands in the quadrature sum.

\begin{table}%[p]
\begin{footnotesize}
\begin{center}
$M=4$\\[1mm]

\begin{tabular}{r|cc|cc|cc|cc|cc} \hline
 $n $ & \multicolumn{2}{c|}{$5$}& \multicolumn{2}{c|}{$50$}& \multicolumn{2}{c|}{$500$}&\multicolumn{2}{c|}{$5\,000$}&\multicolumn{2}{c}{$50\,000$} \\[1pt] \hline 
$h^{-1} \hspace{-2mm}$ & error  & rate & error  & rate & error  & rate & error  &
rate & error  & rate \\[1pt] \hline       
 10   &0.15E-05 && 0.25E-04&& 0.26E-03&&0.26E-02&& 0.25E-01  &        \\ 
 20   & 0.70E-08  &    7.77& 0.11E-06  &   7.81& 0.12E-05 &    7.81 &0.12E-04  &7.81&  0.12E-03&    7.76        \\ 
 40  & 0.29E-10    &    7.94&0.46E-09& 7.95&0.47E-08  &    7.95&0.47E-07&    7.95 & 0.47E-06&    7.95       \\ 
 80   & 0.15E-12  &    7.55&0.18E-11  &7.99& 0.19E-10&    7.99&0.19E-09&    7.99& 0.19E-08&    7.99       \\ 
  160   &0.38E-13 & 2.02 & 0.10E-13  & 7.44& 0.86E-13  & 7.75& 0.84E-12&    7.80& 0.61E-11&    8.26         \\ 
\hline 
\end{tabular}\\[1mm]

$M=3$\\[1mm]

\begin{tabular}{r|cc|cc|cc|cc|cc} \hline
 10   & 0.30E-04&&0.60E-03&&0.62E-02&&0.58E-01&&0.30E+00 &        \\ 
 20   &0.53E-06   &    5.83&0.10E-04  & 5.86  &0.11E-03&    5.85&0.11E-02  & 5.74&0.11E-01  &   4.82       \\ 
 40  & 0.86E-08  &    5.96&0.17E-06  & 5.96&0.17E-05  & 5.96&0.17E-04&    5.96&0.17E-03&    5.94 \\ 
 80   & 0.13E-09 &    5.99&0.26E-08 &    5.99& 0.27E-07&    5.99& 0.27E-06&    5.99 &0.27E-05&    5.99      \\ 
  160   & 0.21E-11  & 5.97&0.41E-10  &    6.00& 0.43E-09 &    6.00 &  0.43E-08  & 6.00 & 0.43E-07&    6.00    \\ 
\hline 
\end{tabular}\\[1mm]

$M=2$\\[1mm]

\begin{tabular}{r|cc|cc|cc|cc|cc} \hline
  10   & 0.74E-03 &&0.15E-01 && 0.13E+00&&0.36E+00&&0.37E+00&        \\ 
 20   & 0.49E-04  &    3.91& 0.10E-02  & 3.89& 0.10E-01  & 3.63 &0.92E-01&    1.98&0.35E+00&    0.08       \\ 
 40  & 0.31E-05  &    3.98& 0.63E-04  & 3.98&0.67E-03 &    3.96& 0.66E-02 &    3.79&0.61E-01&    2.50  \\ 
 80   & 0.20E-06 &    3.99& 0.40E-05  &3.99& 0.42E-04  &3.99&0.42E-03 &    3.98&0.42E-02 &    3.87 \\ 
  160   & 0.12E-07 &    4.00&0.25E-06& 4.00& 0.26E-05 & 4.00 &  0.26E-04&    4.00&0.26E-03&    3.99 \\ 
  \hline 
\end{tabular}\\[1mm]

$M=1$\\[1mm]

\begin{tabular}{r|cc|cc|cc|cc|cc} \hline
10   & 0.26E-01 && 0.37E+00&&0.37E+00&&0.37E+00&&0.37E+00 &        \\ 
 20   &0.68E-02  &1.95&0.35E+00  & 0.07&0.35E+00&    0.07&0.37E+00&    0.00&  0.37E+00&    0.00        \\ 
 40  & 0.17E-02  & 1.99& 0.20E+00  &0.82& 0.20E+00&    0.82& 0.37E+00&    0.00& 0.37E+00&    0.00  \\ 
 80   & 0.43E-03 &  2.00&0.65E-01  &1.61&0.65E-01  &1.61& 0.32E+00&    0.22& 0.37E+00  &0.00 \\ 
  160   & 0.11E-03& 2.00&0.17E-01  &1.90&0.17E-01&1.90& 0.14E+00&    1.15&   0.37E+00 &    0.01  \\ 
     \hline 
\end{tabular}\\[1mm]
\caption{\small Absolute errors and approximation rates
for $\cB_n f(1,0,\ldots,0)$ using $\cB_{M,h}^{(n)}$.}\label{T2} 
\end{center}
\end{footnotesize}
\end{table} 

\begin{table}%[p]
\begin{footnotesize}
\begin{center}
$M=4$\\[1mm]
\begin{tabular}{r|cc|cc|cc} \hline
 $n $ & \multicolumn{2}{c|}{$100\,000$}& \multicolumn{2}{c|}{$1\,000\,000$}& \multicolumn{2}{c}{$10\,000\,000$}
  \\[1pt] \hline 
$h^{-1} \hspace{-2mm}$ & error  & rate & error  & rate & error  & rate  \\[1pt] \hline       
 10   &0.49E-01&&0.28E+00&&0.37E+00&      \\ 
 20  &0.23E-03&    7.71&0.23E-02 &    6.90 &0.23E-01&    4.02      \\ 
 40  &0.95E-06&    7.95&0.95E-05&    7.95& 0.95E-04&    7.91      \\ 
 80   & 0.37E-08 &    7.99& 0.37E-07&    7.99&0.37E-06  &7.99      \\ 
  160  &0.13E-10&    8.20&0.20E-09 &    7.58&0.11E-08 &8.41    \\ 
\hline 
\end{tabular}\\[1mm]

$M=3$\\[1mm]

\begin{tabular}{r|cc|cc|cc} \hline
 10  &0.36E+00&& 0.37E+00&&0.37E+00&       \\ 
  20   &0.21E-01  &4.08&0.16E+00  &1.16&0.37E+00&    0.00       \\ 
 40  & 0.35E-03&    5.92& 0.35E-02  & 5.57&0.33E-01&    3.47       \\ 
 80   &0.55E-05  &    5.99&0.55E-04&    5.98&0.55E-03  &5.92       \\ 
  160  & 0.86E-07&    6.00&0.86E-06 &    6.00 & 0.86E-05 &    6.00      \\ 
\hline 
\end{tabular}\\[1mm]
\caption{\small Absolute errors and approximation rates
for $\cB_n f(1,0,\ldots,0)$ using $\cB_{M,h}^{(n)}$.}\label{T3} 
\end{center}
\end{footnotesize}
\end{table} 

In the remainder of this section we compute the 3-dimensional biharmonic potential by means of the approximating formula \eqref{dimM3bis}. For functions of the form \eqref{sepf} we obtain that, at the points of the uniform grid \(\{h\bk\}\), the 3-dimensional integral \(\cB_3 f\) is approximated by 
\begin{align*}
\cB_3 f(h\bk)\approx &-\frac{h^4 \cD^{5/2}}{8\pi^{3/2}}\tau  \sum_{p=1}^P \beta_p
\sum_s \Phi'(\tau s) \\
& \times
 \Bigg(   \prod_{i=1}^3 \sum_{m_i}f_j^{(p)}(hm_i) 
   \ee^{-(k_i-m_i)^2/(\cD(1+\Phi(\tau s)))} Q_M\Big(\frac{k_i-m_i}{\sqrt{\cD}},\Phi(\tau s)\Big) %\right.
\\
& \quad +
\Phi(\tau s) \sum_{i=1}^3\sum_{m_i} \ee^{-(k_i-m_i)^2/(\cD(1+\Phi(\tau s)))} R_M
\Big(\frac{k_i-m_i}{\sqrt{\cD}},\Phi(\t s)\Big)f_i^{(p)}(hm_i) \\
& \qquad \times \prod_{\substack{j=1\\j\neq i}}^3 \sum_{m_j} \ee^{-(k_j-m_j)^2/(\cD(1+\Phi(\tau s)))}   
Q_M \Big(\frac{k_j-m_j}{\sqrt{\cD}},\Phi(\tau s)\Big) f_j^{(p)}(hm_j) 
\Bigg)  \, .
\end{align*}
In Table \ref{T4}  we report on the relative and absolute  errors, and the approximation rate for the 3-dimensional biharmonic potential \(\cB_3 f\) at the point \((1,1,1)\)  of the density \eqref{f(x)}, which has exact value \(\ee^{-3}\). The numerical results confirm the \(h^{2M}\) convergence of the cubature formula when \(M=1,2,3,4\).

\begin{table}%[p]
\begin{footnotesize}
\begin{center}
\begin{tabular}{r|ccc|ccc} \hline
&\multicolumn{3}{c|}{\(M=4\)}&\multicolumn{3}{c}{\(M=3\)}\\ \hline
$h^{-1} \hspace{-2mm}$ &  absolute error & relative error  &ate  &  absolute error  & relative error &
rate    \\[1pt] \hline       
 10 &  0.236E-06  &  0.474E-05  &&0.822E-05  &  0.165E-03&    \\ 
 20 &  0.965E-09  &  0.194E-07  &    7.93&0.137E-06  &  0.275E-05  &    5.91      \\ 
 40  &  0.381E-11  &  0.765E-10  &    7.99&0.217E-08  &  0.436E-07  &    5.98     \\ 
 80   &  0.150E-13  &  0.301E-12  &    7.99&0.341E-10  &  0.685E-09  &    5.99    \\ 
  160  &  0.438E-14  &  0.879E-13  &    1.77&0.538E-12  &  0.108E-10  &    5.99        \\ 
\hline 
\end{tabular}\\[1mm]

\begin{tabular}{r|ccc|ccc} \hline
&\multicolumn{3}{c|}{\(M=2\)}&\multicolumn{3}{c}{\(M=1\)}\\ \hline
$h^{-1} \hspace{-2mm}$ &  absolute error & relative error  &rate  &  absolute error  & relative error &
rate    \\[1pt] \hline       
 10   &0.217E-03  &  0.435E-02&& 0.359E-02  &  0.722E-01&      \\ 
 20   & 0.143E-04  &  0.287E-03  &    3.92& 0.925E-03  &  0.186E-01  &    1.96      \\ 
 40  & 0.907E-06  &  0.182E-04  &    3.98&0.233E-03  &  0.468E-02  &    1.99     \\ 
 80   &0.569E-07  &  0.114E-05  &    3.99&0.583E-04  &  0.117E-02  &    2.00    \\ 
  160   &0.356E-08  &  0.715E-07  &    4.00& 0.146E-04  &  0.293E-03  &    2.00        \\ 
\hline 
\end{tabular}\\[1mm]

\caption{\small Relative errors, absolute errors and approximation rates
for $\cB_3 f(1,1,1)$ using $\cB_{M,h}^{(3)}$.}\label{T4} 
\end{center}
\end{footnotesize}
\end{table} 

\eject

\end{document}